\newcommand{\C}{\smallCat{C}}
\title{Coequalisers under the Lens}
\author{Matthew Di Meglio
\institute{Department of Mathematics and Statistics\\
Macquarie University\\
Sydney, Australia}
}
\begin{document}
\maketitle

\begin{abstract}
Lenses encode protocols for synchronising systems. We continue the work begun by Chollet et al.\ at the Applied Category Theory Adjoint School in 2020 to study the properties of the category of small categories and asymmetric delta lenses. The forgetful functor from the category of lenses to the category of functors is already known to reflect monos and epis and preserve epis; we show that it preserves monos, and give a simpler proof that it preserves epis. Together this gives a complete characterisation of the monic and epic lenses in terms of elementary properties of their get functors.

Next, we initiate the study of coequalisers of lenses. We observe that not all parallel pairs of lenses have coequalisers, and that the forgetful functor from the category of lenses to the category of functors neither preserves nor reflects all coequalisers. However, some coequalisers are reflected; we study when this occurs, and then use what we learned to show that every epic lens is regular, and that discrete opfibrations have pushouts along monic lenses. Corollaries include that every monic lens is effective, every monic epic lens is an isomorphism, and the class of all epic lenses and the class of all monic lenses form an orthogonal factorisation system.
\end{abstract}

\section{Introduction}

A \textit{bidirectional transformation} between two systems is a specification of when the joint state of the two systems should be regarded as consistent, together with a protocol for updating each system to restore consistency in response to a change in the other~\cite{gibbons:2018:bidirectionaltransformations}. The study of bidirectional transformations goes back to as far as 1981 with Bancilhon and Spyrato’s work on the view-update problem for databases~\cite{bancilhon:1981:updatesemanticsrelationalviews}. The view-update problem is about \textit{asymmetric} bidirectional transformations; those where the state of one of the systems, called the \textit{view}, is completely determined by that of the other, called the \textit{source}. Bidirectional transformations also arise in many other contexts across computer science, such as when programming with complex data structures and when linking user interfaces to data models.

An \textit{asymmetric state-based lens} is a mathematical encoding of an asymmetric bidirectional transformation in which the consistency restoration updates to the source are assumed to be dependent only on the old source state and the updated view state. If \(S\) is the set of source states and~\(V\) is the set of view states, such a lens consists of a \textit{get function} \(S \to V\) and a \textit{put function} \(S \times V \to S\) which, ideally, satisfy certain laws. The earliest known account of asymmetric state-based lenses may be found in Oles’ PhD thesis~\cite[Chapter~VI]{oles:1982:CategoryApproachSemanticsProgrammingLanguages}, where they are called \textit{extensions} of \textit{store shapes}; they are a key ingredient in Oles' semantics for an imperative stack-based programming language with block-scoped variables because they capture the essential properties of a data store which changes shape as variables come into and go out of scope. All recent notions of lens, including the name \textit{lens}, may be traced back to the work of Pierce~et~al.~\cite{pierce:2007:combinatorsbidirectionaltreetransformations}; they proposed variants of asymmetric state-based lenses for modelling bidirectional transformations on tree-structured data, and they also introduced the idea of building lenses compositionally with a domain-specific language such as their lens combinators.

Diskin~et~al.\ highlighted the inadequacy of state-based lenses as a general mathematical model for bidirectional transformations~\cite{diskin:2011:statetodeltabx}, providing several examples of situations in which consistency restoration would benefit from knowing more about each change to the view than just the view's new state. In an \textit{asymmetric delta lens}, their proposed alternative, systems are modelled as categories of states and transitions (deltas) rather than simply as sets of states. Also, the put operation takes as input specifically which transition occurred in the view rather than just the end state of that transition.

Application of category theory to the study of lenses has already proved fruitful. Johnson and Rosebrugh's research program~\cite{JohnsonRosebrugh:2015:SpansDeltaLenses, JohnsonRosebrugh:2016:UnifyingSetBasedDeltaBasedEditBasedLenses, JohnsonRosebrugh:2017:UniversalUpdatesForSymmetricLenses} has enabled a unified treatment of symmetric and asymmetric delta lenses, with the perspective that a symmetric delta lens is an equivalence class of spans of asymmetric delta lenses. Ahman and Uustalu's observation that asymmetric delta lenses are compatible functor cofunctor pairs~\cite{AhmanUustalu:2017:TakingUpdatesSeriously}, and Clarke's generalisation of these lenses to the internal category theory setting~\cite{Clarke:2020:InternalLensesAsFunctorsAndCofunctors}, have enabled an abstract diagrammatic approach to proofs involving these lenses~\cite{Clarke:2021:ADiagrammaticApproachToSymmetricLenses}, in which we may profit from the already well-developed theory of functors and opfibrations. Yet, until the work of Chollet et al.~\cite{Clarke:2021:CategoryLens}, little was known about the category of asymmetric delta lenses. Building on their work, this paper aims to further our understanding of this category.

\subsection*{Outline}

Henceforth, we refer to asymmetric delta lenses simply as \textit{lenses}, which we formally define in \cref{Section: Background}.

In \cref{Section: Characterising monic and epic lenses}, we prove the conjecture by Chollet et al.~\cite{Clarke:2021:CategoryLens} that the forgetful functor from the category of lenses to the category of functors preserves monos. Together with their result that it reflects monos, we deduce that the monic lenses are the unique lenses on cosieves; these are equivalently the out-degree-zero subcategory inclusion functors. We also provide a proof, simpler than the original one sketched by Lack in an unpublished personal communication to Clarke, that the forgetful functor preserves epis.

In \cref{Section: Coequalisers of lenses}, we initiate the study of coequalisers of lenses. We begin with examples of how they are not as well behaved as one might hope; specifically, not all parallel pairs of lenses have coequalisers, and the forgetful functor neither preserves nor reflects all coequalisers. We then prove our main result, \cref{Condition for reflection}, which is about the coequalisers that are actually reflected by the forgetful functor.

In \cref{Section: Pushouts of discrete opfibrations along monos}, we use \cref{Condition for reflection} to show that the category of lenses has pushouts of discrete opfibrations along monos. We then show that every monic lens is effective. It follows that the classes of all monos, all effective monos, all regular monos, all strong monos and all extremal monos in the category of lenses coincide, and thus also that all lenses which are both monic and epic are isomorphisms.

In \cref{Section: Regular epic lenses}, we use \cref{Condition for reflection} again to show that every epic lens is regular. It follows that the classes of all epis, all regular epis, all strong epis and all extremal epis in the category of lenses coincide. It also follows that the class of all epic lenses is left orthogonal to the class of all monic lenses. Together with other known results, this means that they form an orthogonal factorisation system.

\section{Background}
\label{Section: Background}

\subsection*{Notation}
\label{Section: Notation}

Application of functions (functors, lenses, etc.)\ is written by juxtaposing the function name with its argument. Application is right associative, so an expression like \(FGx\) parses as \(F (Gx)\) and not \((FG)x\). Parentheses are only used when needed or for clarity. Binary operators like \(\compose\) have lower precedence than application, so an expression like \(F a \compose F b\) parses as \((F a) \compose (F b)\) and not \(F \paren[\big]{(a \compose F) b}\).

Let \(\Cat\) denote the category whose objects are small categories and whose morphisms are functors. Categories with boldface names \(\A\), \(\B\), \(\C\), etc.\ are always small. We write \(\objectSet{\C}\) for the set of objects of a small category \(\C\), and, for all \(X, Y \in \objectSet{\C}\), we write \(\homSet{\C}{X}{Y}\) for the set of morphisms of \(\C\) from \(X\) to \(Y\). For each \(X \in \objectSet{\C}\), we write \(\outSet{\C}{X}\) for the set \(\DisjointUnion_{Y \in \objectSet{\C}}\homSet{\C}{X}{Y}\) of all morphisms in \(\C\) out of \(X\). We write \(\source{f}\) and \(\target{f}\) for, respectively, the source and target of a morphism \(f\). We also write \(f \colon X \to Y\) to say that \(X, Y \in \objectSet{\C}\) and \(f \in \homSet{\C}{X}{Y}\). The composite of morphisms \(f \colon X \to Y\) and \(g \colon Y \to Z\) is denoted \(g \compose f\).

The category with a single object \(0\) and no non-identity morphisms, also known as the \textit{terminal category}, is denoted \(\terminalCat\). The category with two objects \(0\) and \(1\) and a single non-identity morphism, namely \(u \colon 0 \to 1\), also known as the \textit{interval category}, is denoted \(\intervalCat\). The category with two objects \(0\) and \(1\) and two non-identity morphisms, namely \(v \colon 0 \to 1\) and \(v^{-1} \colon 1 \to 0\), also known as the \textit{free living isomorphism}, is denoted \(\isomorphismCat\). We will identify objects and morphisms of a small category \(\C\) with the corresponding functors \(\terminalCat \to \C\) and \(\intervalCat \to \C\) respectively.

If the square
\begin{equation}
    \label{Equation: Pullback square}
    \begin{tikzcd}
        \D \arrow[r, "T"]\arrow[d, "S" swap] & \B \arrow[d, "G"]\\
        \A \arrow[r, "F" swap] & \C
    \end{tikzcd}
\end{equation}
in \(\Cat\) is a pushout square and \(F' \colon \A \to \E\) and \(G' \colon \B \to \E\) are functors for which \(F' \compose S = G' \compose T\), then we write \(\copair{F'}{G'}\) for the functor \(\C \to \E\) induced from \(F'\) and \(G'\) by the universal property of the pushout. Similarly, if the square \eqref{Equation: Pullback square} in \(\Cat\) is a pullback square and \(S' \colon \E \to \A\) and \(T' \colon \E \to \B\) are functors for which \(F \compose S' = G \compose T'\), then we write \(\pair{S'}{T'}\) for the functor \(\E \to \D\) induced from \(S'\) and \(T'\) by the universal property of the pullback. By our identification of objects with functors from~\(\terminalCat\) mentioned above, if \(A \in \objectSet{\A}\) and \(B \in \objectSet{\B}\) are such that \(FA = GB\), then \(\pair{A}{B}\) is the object of \(\D\) selected by the functor \(\terminalCat \to \D\) induced by the universal property of the pullback from the functors \(\terminalCat \to \A\) and \(\terminalCat \to \B\) that respectively select the objects \(A\) and \(B\).

\subsection*{Lenses and discrete opfibrations}
\label{Section: Lenses and discrete opfibrations}

First, we recall the definition of a (asymmetric delta) lens~\cite{diskin:2011:statetodeltabx}.

\begin{definition}
Given small categories \(\A\) and \(\B\), a \textit{lens} \(F \colon \A \to \B\) consists of
\begin{itemize}
    \item a functor \(F \colon \A \to \B\), called the \textit{get functor} of \(F\), and
    \item a function \(\lift{F}{A} \colon \outSet{\B}{FA} \to \outSet{\A}{A}\) for each \(A \in \objectSet{\A}\), collectively known as the \textit{put functions},
\end{itemize}
such that
\begin{itemize}
    \item \textit{\PutGet{}}: \(F\lift{F}{A}b = b\) for all \(A \in \objectSet{\A}\) and all \(b \in \outSet{\B}{FA}\),
    \item \textit{\PutId{}}: \(\lift{F}{A}\id{FA} = \id{A}\) for all \(A \in \objectSet{\A}\), and
    \item \textit{\PutPut{}}: \(\lift{F}{A}(b' \compose b) = \lift{F}{A'}b' \compose \lift{F}{A}b\) for all \(A \in \objectSet{\A}\), \(b \in \outSet{\B}{FA}\), \(b' \in \outSet{\B}{FA'}\), where \(A' = \target \lift{F}{A}b\).
\end{itemize}
\end{definition}

\noindent There is a category \(\Lens\) whose objects are small categories and whose morphisms are lenses. The composite \(G \compose F\) of lenses \(F \colon \A \to \B\) and \(G \colon \B \to \C\) has get functor which is the composite of the get functors of \(G\) and \(F\), and has  \(\lift{(G \compose F)}{A}c = \lift{F}{A}\lift{G}{FA}c\) for all \(A \in \objectSet{\A}\) and all \(c \in \outSet{\C}{GFA}\). There is also an identity-on-objects forgetful functor \(\forget \colon \Lens \to \Cat\) that sends a lens to its get functor.

\begin{definition}
A functor \(F \colon \A \to \B\) is a \textit{discrete opfibration} if, for each \(A \in \objectSet{\A}\) and each \(b \in \outSet{\B}{FA}\), there is a unique \(a \in \outSet{\A}{A}\) such that \(Fa = b\).
\end{definition}

\begin{remark}
If \(F \colon \A \to \B\) is a discrete opfibration, then there is a unique lens mapped by \(\forget\) to \(F\). We will sometimes also use the name \(F\) to refer to this unique lens above \(F\).
\end{remark}

We also recall Johnson and Roseburgh's ``pullback'' of a cospan of lenses~\cite{JohnsonRosebrugh:2015:SpansDeltaLenses}, which we will refer to as their \textit{proxy pullback}, adopting the terminology of Bumpus and Kocsis~\cite{bumpus:2021:spined-categories:-generalizing-tree-width}.

\begin{definition}
The \textit{proxy pullback} of a lens cospan \(\A \xrightarrow{F} \C \xleftarrow{G}\B\) is a lens span \(\A \xleftarrow{\Gbar} \D \xrightarrow{\Fbar} \B\) where
\begin{itemize}
    \item the get functors of \(\Fbar\) and \(\Gbar\) form a pullback square
    \[\begin{tikzcd}
    \D \arrow[r, "\forget\Fbar"]\arrow[d, "\forget\Gbar" swap] & \B \arrow[d, "\forget G"]\\
    \A \arrow[r, "\forget F" swap] & \C
    \end{tikzcd}\]
    in \(\Cat\) (this determines them up to isomorphism), and
    \item for each \(D \in \objectSet{\D}\), each \(a \in \outSet[\big]{\A}{\Gbar D}\), and each \(b \in \outSet[\big]{\B}{\Fbar D}\),
    \[
    \lift{\Fbar}{D}b = \pair[\big]{\lift{F}{\Gbar D}Gb}{b}
    \qquad\text{and}\qquad
    \lift{\Gbar}{D}a = \pair[\big]{a}{\lift{G}{\Fbar D}Fa}.
    \]
\end{itemize}

\noindent When \(F = G\), the lenses \(\Fbar,\Gbar\colon \D \to \A\) are also called the \textit{proxy kernel pair} of \(F\).
\end{definition}

\section{Characterising monic and epic lenses}
\label{Section: Characterising monic and epic lenses}

\subsection*{Monic lenses}
\label{Section: Monic lenses}

We will study the monos in \(\Lens\) via their relation to those in \(\Cat\), expressed as follows.

\begin{theorem}
The functor \(\forget\) preserves and reflects monos.
\end{theorem}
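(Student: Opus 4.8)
The statement has two halves. That \(\forget\) reflects monos is the theorem of Chollet et al.~\cite{Clarke:2021:CategoryLens}, so the real work is to show that \(\forget\) preserves them. Recall that a functor is monic in \(\Cat\) exactly when it is injective on objects and on morphisms, equivalently exactly when the two projections of its kernel pair coincide; the plan is to verify this latter condition for \(\forget F\) whenever \(F\) is a monic lens. The key observation is that the kernel pair of \(\forget F\) is computed by the proxy kernel pair of \(F\): the proxy pullback of \(\A \xrightarrow{F} \B \xleftarrow{F} \A\) produces lenses \(\Gbar, \Fbar \colon \D \to \A\) whose get functors \(\forget\Gbar, \forget\Fbar \colon \D \to \A\) are precisely the two projections of the pullback \(\D = \A \times_\B \A\) in \(\Cat\), that is, the kernel pair of \(\forget F\).

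The crux is to show that \(F \compose \Gbar = F \compose \Fbar\) as lenses. Their get functors agree because \(\forget F \compose \forget\Gbar = \forget F \compose \forget\Fbar\) by commutativity of the pullback square. For the put functions, I would fix an object \(D = \pair{A_1}{A_2} \in \objectSet{\D}\), so that \(FA_1 = FA_2\), together with a morphism \(b \in \outSet{\B}{FA_1}\), and compute both \(\lift{(F\compose\Gbar)}{D}b = \lift{\Gbar}{D}\lift{F}{A_1}b\) and \(\lift{(F\compose\Fbar)}{D}b = \lift{\Fbar}{D}\lift{F}{A_2}b\) using the defining formulas of the proxy pullback. After a single application of the \PutGet{} law in each case, both expressions collapse to the morphism \(\pair{\lift{F}{A_1}b}{\lift{F}{A_2}b}\) of \(\D\). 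This short calculation is the technical heart of the argument and the step I expect to be the only genuine obstacle.

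With \(F \compose \Gbar = F \compose \Fbar\) established, monicity of \(F\) in \(\Lens\) immediately gives \(\Gbar = \Fbar\), and hence \(\forget\Gbar = \forget\Fbar\). Thus the two projections of the kernel pair of \(\forget F\) coincide, so \(\forget F\) is monic in \(\Cat\), as required.

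It is worth recording why a more naive route is blocked, as this explains the choice of tool. One might try to detect failure of injectivity of \(\forget F\) on objects or morphisms directly, by lifting the selecting functors \(\terminalCat \to \A\) and \(\intervalCat \to \A\) to lenses and composing with \(F\). The \PutGet{} law, however, forces the get functor of any lens to send the morphisms out of each object onto the morphisms out of its image, so these selectors lift to lenses only in degenerate cases. The proxy kernel pair avoids this obstruction entirely by supplying a bona fide parallel pair of lenses to which the monicity of \(F\) can be applied.
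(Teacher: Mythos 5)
Your proof is correct and takes essentially the same route as the paper: form the proxy kernel pair \(P_1, P_2\) of the monic lens, use monicity to conclude \(P_1 = P_2\), and observe that \(\forget P_1\) and \(\forget P_2\) are the genuine kernel pair of the get functor in \(\Cat\), so it is monic. The only difference is that you explicitly verify the cofork identity \(F \compose P_1 = F \compose P_2\) from the proxy-pullback put formulas and \PutGet{} --- a step the paper asserts without proof --- and your calculation of it is accurate.
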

Reflection was proved and preservation conjectured by Chollet et al.~\cite{Clarke:2021:CategoryLens}. Recalling that a morphism is monic if and only if it has a kernel pair with both morphisms equal, we may prove preservation.

\begin{proof}[Proof that \(\forget\) preserves monos.]
Let \(M \colon \A \to \B\) be a monic lens, and let \(P_1, P_2 \colon \Ker \forget M \to \A\) be its proxy kernel pair in \(\Lens\). As \(M\) is monic and \(M \compose P_1 = M \compose P_2\), actually \(P_1 = P_2\), and so \(\forget P_1 = \forget P_2\). But \(\forget P_1\) and \(\forget P_2\) are the (real) kernel pair of \(\forget M\) in \(\Cat\). Hence \(\forget M\) is a monic functor.
\end{proof}

Chollet et al.~\cite{Clarke:2021:CategoryLens} also showed that the get functor of a lens is monic if and only if it is a cosieve.

\begin{definition}
A \textit{cosieve} is an injective-on-objects discrete opfibration.
\end{definition}

\begin{corollary}
\label{Monic lenses are cosieves}
The functor \(\forget\) restricts to a bijection between monic lenses and cosieves.
\end{corollary}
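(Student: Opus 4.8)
The plan is to establish the corollary by combining the theorem just proved with the two results of Chollet et al.\ that the excerpt recalls. The statement to prove is that \(\forget\) restricts to a bijection between monic lenses and cosieves, so I need to show that \(\forget\) sends each monic lens to a cosieve, that this assignment is injective on monic lenses, and that it is surjective onto cosieves. The key inputs are: (i) the theorem that \(\forget\) preserves and reflects monos; (ii) the result of Chollet et al.\ that a functor is monic if and only if it is a cosieve; and (iii) the earlier remark that a discrete opfibration has a \emph{unique} lens above it.

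First I would verify well-definedness and the cosieve property. Given a monic lens \(M \colon \A \to \B\), the preservation half of the theorem tells us \(\forget M\) is a monic functor, and then the characterisation of Chollet et al.\ tells us \(\forget M\) is a cosieve. Hence \(\forget\) does indeed restrict to a function from monic lenses to cosieves.

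Next I would establish injectivity. Since a cosieve is in particular a discrete opfibration, the remark guarantees that there is at most one lens above any cosieve. Thus if two monic lenses \(M\) and \(M'\) satisfy \(\forget M = \forget M'\), they are the same lens; the restricted map is injective. For surjectivity, I would start with an arbitrary cosieve \(F\). By the remark there is a unique lens above \(F\), which I also denote \(F\); its get functor is the cosieve, hence monic, so by the reflection half of the theorem this lens \(F\) is itself monic in \(\Lens\). Therefore every cosieve arises as \(\forget\) of a monic lens, giving surjectivity.

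I expect the main subtlety, rather than any real obstacle, to lie in keeping the bookkeeping between lenses and functors clean: the bijection is really the composite of two facts, that every cosieve is a discrete opfibration (so lifts uniquely to a lens, by the remark) and that cosieves are exactly the monic functors (by Chollet et al.), with the theorem supplying the passage between monic lenses and monic functors in both directions. Once these are assembled, no computation is needed; the proof is essentially a diagram chase at the level of the two categories \(\Lens\) and \(\Cat\) via \(\forget\).
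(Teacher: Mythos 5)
Your proof is correct and follows essentially the same route as the paper's: both directions rest on the same three ingredients (preservation/reflection of monos by \(\forget\), Chollet et al.'s identification of monic functors with cosieves, and the uniqueness of the lens above a discrete opfibration). The paper's proof is simply a terser version of yours, leaving the injectivity/surjectivity bookkeeping implicit in the uniqueness of the lifted lens.
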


\begin{proof}
A cosieve is a discrete opfibration, so there is a unique lens above it; by reflection, this lens is monic. Conversely, the get functor of a monic lens is, by preservation, monic, and so is a cosieve.
\end{proof}

The above result says that monic lenses and cosieves are essentially the same. We continue to use the term cosieve for functors when we wish to distinguish these from monic lenses.

\subsection*{Lens images and factorisation}
\label{Section: Lens images and factorisation}

The images of the object and morphism maps of a functor do not always form a subcategory of a functor's target category. The situation is nicer for the get functor of a lens \(F\); in this case, the images actually form an out-degree-zero subcategory \(\Image F\) of the lens' target category, which we will call the \textit{image} of \(F\). By \textit{out-degree-zero} subcategory, we mean one for which any morphism out of an object in the subcategory belongs to the subcategory. As cosieves are exactly the out-degree-zero subcategory inclusion functors, we obtain the following factorisation result.

\begin{proposition}
\label{Lens image factorisation}
    Every lens \(F \colon \A \to \B\) has a factorisation
    \[
    \begin{tikzcd}[column sep=large]
        \A \arrow[r, "E" swap] \arrow[rr, bend left=15, shift left=1, "F"] & \Image F \arrow[r, "M" swap] & \B
    \end{tikzcd}
    \]
    in \(\Lens\) where \(M\) is monic and \(E\) is surjective on objects and morphisms.
\end{proposition}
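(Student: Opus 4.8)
The plan is to construct the image factorisation explicitly and then verify that the two factors are lenses with the required properties. First I would define the image $\Image F$ as a subcategory of $\B$: its objects are the values $FA$ for $A \in \objectSet{\A}$, and its morphisms out of such an object $FA$ are \emph{all} morphisms of $\B$ out of $FA$, that is, all of $\outSet{\B}{FA}$. The key observation is that \PutGet{} forces every $b \in \outSet{\B}{FA}$ to be the image under $F$ of the morphism $\lift{F}{A}b$, so the object of $\B$ at which such a $b$ terminates is itself of the form $FA'$ (with $A' = \target \lift{F}{A}b$) and hence lies in $\Image F$. This is exactly what makes $\Image F$ an out-degree-zero subcategory: it is closed under composition and contains identities because these come from $F$'s structure, and it is closed under postcomposition because any morphism out of a member object is automatically a member. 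I should check this closure carefully — that $\Image F$ really is a subcategory and really is out-degree-zero — since the whole factorisation rests on it.

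Next I would take $M \colon \Image F \to \B$ to be the inclusion. As an out-degree-zero subcategory inclusion, $M$ is a cosieve, hence (by \cref{Monic lenses are cosieves}) carries a unique lens structure and is monic; its put functions are determined by the discrete opfibration structure, sending each $b \in \outSet{\B}{FA}$ to itself viewed in $\Image F$. Then I would define the corestriction $E \colon \A \to \Image F$: its get functor agrees with $F$ on objects and morphisms but lands in $\Image F$, and I would set $\lift{E}{A} = \lift{F}{A}$, using that $\outSet[\big]{\Image F}{EA} = \outSet{\B}{FA}$ by construction so the codomains match. The lens laws \PutGet{}, \PutId{} and \PutPut{} for $E$ then follow immediately from those for $F$, since the put functions and the action of the get functor are literally unchanged. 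By construction $E$ is surjective on objects (every object of $\Image F$ is some $FA$) and surjective on morphisms (every morphism of $\Image F$ out of $FA$ is some $b = F\lift{F}{A}b$, again by \PutGet{}).

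Finally I would confirm that the composite $M \compose E$ recovers $F$ both as a functor and as a lens. On gets this is clear since $M$ is an inclusion. On puts, the composite formula gives $\lift{(M \compose E)}{A}b = \lift{E}{A}\lift{M}{EA}b = \lift{E}{A}b = \lift{F}{A}b$, using that $M$'s put is the identity assignment on morphisms out of $FA$; so the lens structures agree. The main obstacle, and the only genuinely substantive point, is verifying that $\Image F$ is closed under postcomposition so that it is a bona fide out-degree-zero subcategory; once that is established, everything else is routine bookkeeping that transports the lens laws unchanged across the factorisation, and the monicity of $M$ together with the surjectivity of $E$ are then immediate from the definitions and \cref{Monic lenses are cosieves}.
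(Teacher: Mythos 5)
Your proof is correct and follows essentially the same route as the paper: the paper also constructs $\Image F$ as the out-degree-zero subcategory of $\B$ determined by the get functor's image (using \PutGet{} to see that every morphism out of an image object is itself in the image), takes $M$ to be the resulting cosieve inclusion with its unique (monic) lens structure, and takes $E$ to be the corestriction carrying the put functions of $F$ unchanged. The closure point you flag as the substantive step is indeed the same key observation the paper relies on.
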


Recall that a morphism \(e \colon A \to B\) is \textit{left orthogonal} to a morphism \(m \colon C \to D\) if, for all pairs of morphisms \(f \colon A \to C\) and \(g \colon B \to D\) such that \(g \compose e = m \compose f\), there is a unique morphism \(h \colon B \to C\), called the \textit{diagonal filler}, such that \(f = h \compose e\) and \(g = m \compose h\). Also recall that classes \(\mathcal{E}\) and \(\mathcal{M}\) of morphisms form an \textit{orthogonal factorisation system} if \(\mathcal{E}\) is the class of all morphisms that are left orthogonal to all morphisms in \(\mathcal{M}\), and every morphism \(f\) factors as \(f = m \compose e\) for some \(e \in \mathcal{E}\) and some \(m \in \mathcal{M}\).

\begin{remark}
\label{Factorisation System}
The above factorisation is already known to Johnson and Roseburgh, who showed that the surjective-on-objects lenses and the injective-on-objects-and-morphisms lenses form an orthogonal factorisation system on \(\Lens\)~\cite{JohnsonRoseburgh:2021:TheMoreLegsTheMerrier}. Our addition is that this is actually an epi-mono factorisation system; we have already shown that the injective-on-objects-and-morphisms lenses are exactly the monic lenses, and we will show in the next section that the surjective-on-objects lenses are exactly the epic lenses. In \cref{Section: Regular epic lenses}, we will also deduce the orthogonality without explicitly constructing the diagonal fillers.
\end{remark}

\subsection*{Epic lenses}
\label{Section: Epic lenses}

We may also study the epis in \(\Lens\) via their relation to those in \(\Cat\).

\begin{theorem}
\label{U preserves epis}
The functor \(\forget\) preserves and reflects epis.
\end{theorem}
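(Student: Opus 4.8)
The plan is to reduce both halves of the theorem to a single clean criterion and then do the only genuine work in the preservation half. The criterion $(\star)$ I would aim for is this: for a lens \(F \colon \A \to \B\), the get functor \(\forget F\) is epic in \(\Cat\) if and only if \(F\) is surjective on objects.

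To establish \((\star)\) I would first record two elementary facts about \(\Cat\). Any functor that is epic in \(\Cat\) is surjective on objects: if \(B_0 \in \objectSet{\B}\) is not in the image of \(F\), then the two functors \(\B \to \isomorphismCat\) that send everything to \(0\), except that one of them sends \(B_0\) to \(1\), are distinct yet agree after precomposition with \(F\) — this works because \(\isomorphismCat\) is codiscrete, so a functor into it is just a function on objects and functoriality is automatic. Conversely, if the get functor of a lens is surjective on objects, then using a put function and \PutGet{} to lift each morphism out of each object shows it is also surjective on morphisms, and a functor surjective on both objects and morphisms is epic in \(\Cat\) by a one-line chase. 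Together these give \((\star)\).

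For reflection (the known direction), suppose \(\forget F\) is epic and let \(G, H \colon \B \to \C\) be lenses with \(G \compose F = H \compose F\). Applying \(\forget\) and cancelling the epi \(\forget F\) shows \(\forget G = \forget H\), so the get functors agree; the put equation \(\lift{F}{A}\lift{G}{FA}c = \lift{F}{A}\lift{H}{FA}c\) together with injectivity of \(\lift{F}{A}\) (immediate from \PutGet{}) then shows the put functions of \(G\) and \(H\) agree at every object in the image of \(F\). Since \((\star)\) makes \(F\) surjective on objects, they agree everywhere, so \(G = H\) and \(F\) is epic.

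For preservation, by \((\star)\) it suffices to show that an epic lens \(F\) is surjective on objects, which I would prove contrapositively. Suppose some \(B_0\) is not in the image of \(F\). The image \(\Image F\) is an out-degree-zero subcategory of \(\B\), so its inclusion is a cosieve and in particular there are no morphisms from \(\Image F\) to \(B_0\). I would then form the pushout \(\C = \B \sqcup_{\Image F} \B\) of two copies of \(\B\) glued along \(\Image F\), with inclusions \(\iota_1, \iota_2 \colon \B \to \C\) satisfying \(\iota_1 \compose F = \iota_2 \compose F\) (as \(F\) lands in the shared copy) but \(\iota_1 B_0 \neq \iota_2 B_0\). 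The main obstacle is verifying that \(\iota_1\) and \(\iota_2\) are lenses, i.e.\ that each is a discrete opfibration; this is exactly where out-degree-zeroness is used, since from a shared object one can only reach other image objects, so no composite crosses from one copy into the other, whence \(\outSet{\C}{\iota_k X} \cong \outSet{\B}{X}\) for every \(X\). As discrete opfibrations carry a unique lens structure, \(\iota_1\) and \(\iota_2\) are distinct lenses equalised by \(F\), so \(F\) is not epic. This contradiction, together with \((\star)\), shows \(\forget\) preserves epis.
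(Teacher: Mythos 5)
Your proposal is correct, and its engine is in fact the same as the paper's: both arguments cofork the lens by the two legs of the pushout of \(\B\) with itself along the image cosieve \(\Image F \monicTo \B\), and both lift those legs to \(\Lens\) using the fact that a discrete opfibration carries a unique lens structure. The packaging, however, is genuinely different. The paper (\cref{Proxy cokernel pair in lens}) takes the cokernel pair \(\Jbar_1, \Jbar_2\) of \(\forget F\) itself, identifies it with the cokernel pair of \(\forget M\), invokes pushout-stability of cosieves to lift it, and then concludes abstractly: an epic lens forces \(J_1 = J_2\), and a morphism whose cokernel-pair legs coincide is epic. You invert this logical order: you first prove that an epic lens is surjective on objects, by exhibiting two distinct inclusions \(\iota_1, \iota_2 \colon \B \to \B \sqcup_{\Image F} \B\) that disagree at a missing object \(B_0\), and only then upgrade to surjectivity on morphisms via the puts and \PutGet{}, hence to epi in \(\Cat\). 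In effect you prove the paper's \cref{Epic lenses are surjective on objects and morphisms} en route to the theorem, whereas the paper derives it afterwards as a corollary; you also re-prove reflection, which the paper merely cites from Chollet et al. Your route buys self-containedness and the elementary characterisation directly; the paper's buys brevity (no explicit pushout computation, since it cites pushout-stability of cosieves, ultimately Fritsch--Latch, where you verify the discrete-opfibration property of \(\iota_1, \iota_2\) by hand) and reusability, since its lifted pair \(J_1, J_2\) is later shown to be the actual cokernel pair in \(\Lens\). Two points to make explicit if you write this up: first, the equality \(\iota_1 \compose F = \iota_2 \compose F\) must hold in \(\Lens\), not just in \(\Cat\); your parenthetical ``\(F\) lands in the shared copy'' should be expanded into the observation that \(\iota_1 \compose M = \iota_2 \compose M\) is a discrete opfibration and therefore carries a unique lens structure, which is exactly the paper's argument. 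Second, your concrete description of the glued category assumes that pushouts along cosieves are computed ``naively''; this is true, but it is itself a theorem (the paper's \cref{Pushout of cosieve in Cat}, due to Fritsch and Latch), so it deserves a citation rather than the heuristic that no composite crosses between the two copies.
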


Again, reflection was proved and preservation conjectured by Chollet et al.~\cite{Clarke:2021:CategoryLens}. The first proof of preservation was sketched by Lack in an unpublished personal communication to Clarke; we present a new, simpler proof below. First, we recall some preliminary results about epic functors and epic lenses.

\begin{proposition}
\label{Epic functors}
Every epic functor is surjective on objects. Every functor that is surjective both on objects and on morphisms is epic.
\end{proposition}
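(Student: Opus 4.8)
The statement comprises two independent implications, and the plan is to dispatch the easy one by a direct diagram chase and the other by an explicit construction.

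For the second assertion---every functor \(F \colon \A \to \B\) that is surjective on objects and on morphisms is epic---I would suppose that \(G, H \colon \B \to \C\) satisfy \(G \compose F = H \compose F\) and check that \(G = H\) on objects and on morphisms separately. Given \(B \in \objectSet{\B}\), surjectivity on objects yields some \(A\) with \(FA = B\), so \(GB = GFA = HFA = HB\); given a morphism \(b\) of \(\B\), surjectivity on morphisms yields a morphism \(a\) with \(Fa = b\), so \(Gb = GFa = HFa = Hb\). Hence \(G = H\), and \(F\) is epic. This direction is entirely routine.

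For the first assertion---every epic functor is surjective on objects---I would argue the contrapositive. Suppose \(F\) misses an object \(B_0 \in \objectSet{\B}\), meaning \(B_0 \neq FA\) for every \(A \in \objectSet{\A}\). Let \(\C\) be the codiscrete category on two objects \(0\) and \(1\), that is, the category with exactly one morphism between each ordered pair of objects. Since any assignment of objects extends uniquely to a functor into a codiscrete category, I may define functors \(G, H \colon \B \to \C\) freely on objects: let \(G\) send every object to \(0\), and let \(H\) send \(B_0\) to \(1\) and every other object to \(0\). Because \(B_0\) is not in the image of \(F\), the composites \(G \compose F\) and \(H \compose F\) agree on objects, and they agree on morphisms automatically as \(\C\) is codiscrete; thus \(G \compose F = H \compose F\). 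Yet \(GB_0 = 0 \neq 1 = HB_0\), so \(G \neq H\) and \(F\) is not epic.

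The only point requiring care is the well-definedness of \(H\) as a functor, which is immediate for a codiscrete target since composition and identities are forced; this is exactly where the construction would break down for a non-codiscrete \(\C\). Conceptually, this witness realises the fact that the cokernel pair of \(F\)---whose object set is the pushout of object sets, as \(\objectSet{-} \colon \Cat \to \Set\) preserves colimits, being left adjoint to the codiscrete-category functor---separates the two copies of any object not hit by \(F\). I would also note that the converse of the first assertion fails: an epic functor need not be surjective on morphisms, which is precisely why the two sentences of the proposition are asymmetric.
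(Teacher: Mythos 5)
The paper states this proposition as a recalled preliminary fact and gives no proof of its own, so there is nothing to diverge from; your argument is correct and complete. The second assertion is the routine chase you give, and for the first your contrapositive via the codiscrete (chaotic) two-object category is the standard witness: since every hom-set of the codiscrete category is a singleton, any object assignment extends uniquely to a functor and two such functors agreeing on objects are equal, so \(G \compose F = H \compose F\) holds while \(G \neq H\) separates the missed object. Your side remarks are also accurate --- \(\objectSet{-}\) is indeed left adjoint to the codiscrete-category functor (and right adjoint to the discrete one), and the asymmetry you flag is exactly the paper's point in its subsequent example of an epic functor that is not surjective on morphisms.
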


Recall that not all epic functors are surjective on morphisms.

\begin{example}
\label{Example epic functor not surjective on morphisms}
Let \(J \colon \intervalCat \to \isomorphismCat\) be the functor that sends the non-identity morphism \(u\) of the interval category~\(\intervalCat\) to the morphism \(v\) of the free living isomorphism \(\isomorphismCat\). Then \(J\) is epic because any two functors out of \(\isomorphismCat\) which agree on \(v\) must also agree on \(v^{-1}\). However, the morphism \(v^{-1}\) is not in the image of \(J\).
\end{example}

\begin{proposition}
\label{Proxy cokernel pair in lens}
Let \(F \colon \A \to \B\) be a lens, and let \(\Jbar_1, \Jbar_2 \colon \B \to C\) be the cokernel pair of \(\forget F\). Then \(\Jbar_1\) and \(\Jbar_2\) are cosieves, and the unique lenses \(J_1\) and \(J_2\) above \(\Jbar_1\) and \(\Jbar_2\) satisfy \(J_1 \compose F = J_2 \compose F\).
\end{proposition}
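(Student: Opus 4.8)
The plan is to compute the cokernel pair of $\forget F$ explicitly enough to recognise $\Jbar_1$ and $\Jbar_2$ as cosieves, and then to exploit the out-degree-zero property of $\Image F$ to match up the put functions.

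First I would reduce to the case of a cosieve. By \cref{Lens image factorisation} the lens $F$ factors as $\A \xrightarrow{E} \Image F \xrightarrow{M} \B$ with $M$ monic and $E$ surjective on objects and morphisms, so $\forget F = \forget M \compose \forget E$ with $\forget M$ a cosieve (by \cref{Monic lenses are cosieves}) and $\forget E$ epic (by \cref{Epic functors}). Since $\forget E$ is epic, a parallel pair $G_1, G_2 \colon \B \to X$ satisfies $G_1 \compose \forget F = G_2 \compose \forget F$ if and only if $G_1 \compose \forget M = G_2 \compose \forget M$, so the cokernel pair of $\forget F$ coincides with that of the out-degree-zero subcategory inclusion $\forget M \colon \Image F \to \B$. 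I may therefore replace $\forget F$ by this inclusion when computing the cokernel pair.

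Next I would describe the pushout $C$ concretely. Since $\objectSet{-} \colon \Cat \to \Set$ is a left adjoint (to the codiscrete-category functor) it preserves pushouts, so $\objectSet C$ is two copies of $\objectSet\B$ glued along $\objectSet{\Image F}$; in particular $\Jbar_1$ and $\Jbar_2$ are injective on objects. For the morphisms, the key point is that $\Image F$ being out-degree-zero forces every alternating composite of $\Jbar_1$- and $\Jbar_2$-morphisms to collapse: whenever a $\Jbar_1$-segment meets a $\Jbar_2$-segment, the object where they meet lies in $\Image F$, and a morphism of $\B$ with source in $\Image F$ lies entirely in $\Image F$, where $\Jbar_1$ and $\Jbar_2$ agree; hence the two segments merge into one. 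Consequently every morphism of $C$ is $\Jbar_i\beta$ for a unique morphism $\beta$ of $\B$, and the morphisms out of $\Jbar_i b$ are exactly the $\Jbar_i\beta$ with $\source{\beta} = b$. This exhibits $\Jbar_1$ and $\Jbar_2$ as discrete opfibrations, hence, with injectivity on objects, as cosieves. I expect this morphism analysis to be the main obstacle, since pushouts in $\Cat$ create zigzag composites in general and some care is needed to show that the normal form is well defined.

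Finally, let $J_1$ and $J_2$ be the unique lenses above the cosieves $\Jbar_1$ and $\Jbar_2$. To check $J_1 \compose F = J_2 \compose F$ I would compare get functors and put functions separately. The get functors agree because $\forget(J_i \compose F) = \Jbar_i \compose \forget F$ and the defining commutativity of the cokernel pair gives $\Jbar_1 \compose \forget F = \Jbar_2 \compose \forget F$. For the put functions, fix $A \in \objectSet\A$; since $FA \in \Image F$ we have $\Jbar_1 FA = \Jbar_2 FA$, so $\lift{(J_1 \compose F)}{A}$ and $\lift{(J_2 \compose F)}{A}$ share the domain $\outSet{C}{\Jbar_1 FA}$. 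Using the composition rule $\lift{(J_i \compose F)}{A}c = \lift{F}{A}\lift{J_i}{FA}c$, it suffices to show $\lift{J_1}{FA} = \lift{J_2}{FA}$. But $\lift{J_i}{FA}c$ is by definition the unique $\beta \in \outSet{\B}{FA}$ with $\Jbar_i\beta = c$; as $FA \in \Image F$ is out-degree-zero, every such $\beta$ lies in $\Image F$, where $\Jbar_1$ and $\Jbar_2$ coincide, so the conditions $\Jbar_1\beta = c$ and $\Jbar_2\beta = c$ are equivalent and select the same $\beta$. Hence the put functions agree and $J_1 \compose F = J_2 \compose F$.
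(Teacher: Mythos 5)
Your proof is correct, and its opening move --- factoring \(F = M \compose E\) via \cref{Lens image factorisation} and using that \(\forget E\) is epic to identify the cokernel pair of \(\forget F\) with that of the cosieve \(\forget M\) --- is exactly the paper's. You diverge in how the two remaining steps are handled. For cosieve-ness of \(\Jbar_1\) and \(\Jbar_2\), the paper simply cites the pushout-stability of cosieves (a fact substantiated later by the Fritsch--Latch construction recorded in \cref{Pushout of cosieve in Cat}), whereas you rederive it by computing the pushout \(\B \sqcup_{\Image F} \B\) explicitly; your collapsing argument for alternating composites (a meeting object must be glued, hence lies in \(\Image F\), and out-degree-zero then merges the segments) is the right one, and you correctly flag that the real content is the well-definedness of the resulting normal form --- making that fully rigorous essentially amounts to reproving Fritsch--Latch in this special case. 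For the final equation, the paper argues more abstractly: \(\Jbar_1 \compose \forget M = \Jbar_2 \compose \forget M\) is a discrete opfibration (a composite of cosieves), so it carries a unique lens structure, whence \(J_1 \compose M = J_2 \compose M\), and composing with \(E\) gives \(J_1 \compose F = J_2 \compose F\). You instead compare put functions pointwise at each \(FA\), using out-degree-zero of \(\Image F\) to see that the lifts along \(J_1\) and \(J_2\) agree; this is a concrete re-proof of the same uniqueness principle. In short, your route is more self-contained and makes the combinatorics visible, while the paper's is shorter because it leans on the known stability of cosieves and on uniqueness of lens structures over discrete opfibrations.
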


\begin{proof}
Let \(F = M \compose E\) be the factorisation of \(F\) given in \cref{Lens image factorisation}. By \cref{Epic functors}, \(\forget E\) is an epic functor. As \(\Jbar_1 \compose \forget M \compose \forget E = \Jbar_1 \compose \forget F = \Jbar_2 \compose \forget F = \Jbar_2 \compose \forget M \compose \forget E\), actually \(\Jbar_1 \compose \forget M = \Jbar_2 \compose \forget M\). It follows that \(\Jbar_1\) and \(\Jbar_2\) are also the cokernel pair of \(\forget M\). As cosieves are pushout stable and \(\forget M\) is a cosieve, so are \(\Jbar_1\) and \(\Jbar_2\). As there is a unique lens above the discrete opfibration \(\Jbar_1 \compose \forget M = \Jbar_2 \compose \forget M\), we must have that \(J_1 \compose M = J_2 \compose M\).
\end{proof}

\begin{remark}
Later, we will see that \(J_1\) and \(J_2\) are actually a cokernel pair of \(F\) in \(\Lens\).
\end{remark}

\begin{proof}[Proof that \(\forget\) preserves epis.]
Let \(E \colon \A \to \B\) be an epic lens, and \(J_1\) and \(J_2\) the unique lenses above the cokernel pair of \(\forget E\) from \cref{Proxy cokernel pair in lens}. As \(J_1 \compose E =J_2 \compose E\) and \(E\) is epic, actually \(J_1 = J_2\), and so \(\forget J_1 = \forget J_2\). But \(\forget J_1\) and \(\forget J_2\) are the cokernel pair of \(\forget E\), so \(\forget E\) is also epic.
\end{proof}

\begin{corollary}
\label{Epic lenses are surjective on objects and morphisms}
Let \(F\) be a lens. Then the following are equivalent:
\begin{enumerate}[label=\normalfont(\arabic*)]
\item \(F\) is epic,
\item \(\forget F\) is surjective on objects,
\item \(\forget F\) is surjective on morphisms.
\end{enumerate}
\end{corollary}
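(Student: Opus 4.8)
The plan is to prove the implications $(1) \Rightarrow (2) \Rightarrow (3) \Rightarrow (1)$, drawing on \cref{U preserves epis} and \cref{Epic functors}. The implication $(1) \Rightarrow (2)$ is immediate: if $F$ is epic then $\forget F$ is epic by \cref{U preserves epis}, and every epic functor is surjective on objects by \cref{Epic functors}.

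The step that actually exploits the lens structure is $(2) \Rightarrow (3)$. Given a morphism $b$ of $\B$ with source $Y$, surjectivity on objects provides an object $A \in \objectSet{\A}$ with $FA = Y$, so that $b \in \outSet{\B}{FA}$. The put function then yields a morphism $\lift{F}{A}b$ of $\A$, and the \PutGet{} law gives $F\lift{F}{A}b = b$. Thus $b$ lies in the image of the morphism map of $\forget F$, and so $\forget F$ is surjective on morphisms.

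For $(3) \Rightarrow (1)$, I would first observe that surjectivity on morphisms entails surjectivity on objects: each identity morphism $\id{B}$ of $\B$ equals $Fa$ for some morphism $a$ of $\A$, whence $F\source a = B$. So $\forget F$ is surjective both on objects and on morphisms, hence epic by \cref{Epic functors}, and therefore $F$ is epic by the reflection part of \cref{U preserves epis}.

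I expect no genuine obstacle here; the only substantive ingredient is the \PutGet{} law used in $(2) \Rightarrow (3)$. It is worth emphasising that this is precisely where lenses behave better than arbitrary functors: for a plain functor, surjectivity on objects need not imply surjectivity on morphisms (see \cref{Example epic functor not surjective on morphisms}), but the put functions of a lens repair exactly this gap, causing all three conditions to collapse into one.
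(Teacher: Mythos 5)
Your proof is correct. The only implication the paper proves itself is \((1) \implies (2)\), and you prove it in exactly the same way: \(\forget F\) is epic by \cref{U preserves epis}, hence surjective on objects by \cref{Epic functors}. For the remaining implications the paper simply cites Chollet et al., whereas you give direct arguments: the \PutGet{} lifting argument for \((2) \implies (3)\) (which is indeed the substantive point where lens structure repairs the failure illustrated by \cref{Example epic functor not surjective on morphisms}), and, for \((3) \implies (1)\), surjectivity on objects via identities, then \cref{Epic functors} to get that \(\forget F\) is epic, then the reflection half of \cref{U preserves epis} to conclude that \(F\) is epic. Both arguments are valid and non-circular, since reflection of epis and \cref{Epic functors} are prior results not depending on this corollary; your version buys self-containedness, while the paper's buys brevity by delegating to the cited work.
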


\begin{proof}
Chollet et al.~\cite{Clarke:2021:CategoryLens} showed that (2) and (3) are equivalent, and imply (1). Suppose that \(F\) is epic. As \(\forget\) preserves epis (\cref{U preserves epis}), so is \(\forget F\). By \cref{Epic functors}, \(\forget F\) is surjective on objects.
\end{proof}

\section{Coequalisers of lenses}
\label{Section: Coequalisers of lenses}

Given morphisms \(f_1, f_2 \colon A \to B\),
we say that a morphism \(e \colon B \to C\) \textit{coforks} \(f_1\) and \(f_2\) if \(e \compose f_1 = e \compose f_2\). Some authors would use the verb coequalise where we use the verb cofork. Unlike those authors, we say that \(e\) \textit{coequalises} \(f_1\) and \(f_2\) only when \(e\) is universal among coforks of \(f_1\) and \(f_2\).

\subsection*{Non-existence, non-preservation and non-reflection of coequalisers}
\label{Section: Non-existence, non-preservation and non-reflection of coequalisers}

Recall that \(\Cat\) has all coequalisers. Shortly, we will construct several counterexamples to the well-behavedness of coequalisers in \(\Lens\), at least with respect to those in~\(\Cat\). To do this, we will use the following proposition, which gives necessary conditions for a cofork of lenses to be a coequaliser.

\begin{proposition}
\label{Necessary condition for existence of coequaliser}
Let \(F_1, F_2 \colon \A \to \B\) be lenses with coequaliser \(E \colon \B \to \C\) in \(\Lens\). Then
\begin{enumerate}[label=\normalfont(\arabic*)]
    \item 
    \label{Necessary condition 1}
    for each cofork \(G \colon \B \to \D\) of \(F_1\) and \(F_2\),
    \(\lift{G}{B}d = \lift{E}{B}E \lift{G}{B}d\)
    for all \(B \in \objectSet{\B}\) and all \(d \in \outSet{\D}{GB}\); and
    \item
    \label{Necessary condition 2}
    in particular, \(E\) is the unique lens above \(\forget E\) that coforks \(F_1\) and \(F_2\).
\end{enumerate}
\end{proposition}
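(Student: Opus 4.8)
The plan is to read off both parts from the formula for the put functions of a composite lens together with \PutGet{} for $E$; the coequaliser's universal property does all the conceptual work. For part~\ref{Necessary condition 1}, I would fix a cofork $G \colon \B \to \D$. Since $E$ coequalises $F_1$ and $F_2$, its universal property supplies a lens $H \colon \C \to \D$ with $G = H \compose E$. The composite put formula then gives $\lift{G}{B}d = \lift{E}{B}\lift{H}{EB}d$ for every $B \in \objectSet{\B}$ and every $d \in \outSet{\D}{GB}$. Applying the get functor of $E$ and using \PutGet{} for $E$ on the morphism $\lift{H}{EB}d \in \outSet{\C}{EB}$ collapses $E\lift{E}{B}\lift{H}{EB}d$ to $\lift{H}{EB}d$, so that $E\lift{G}{B}d = \lift{H}{EB}d$. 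Feeding this back in yields $\lift{E}{B}E\lift{G}{B}d = \lift{E}{B}\lift{H}{EB}d = \lift{G}{B}d$, which is the desired identity.

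For part~\ref{Necessary condition 2}, I would let $E' \colon \B \to \C$ be any lens with $\forget E' = \forget E$ that also coforks $F_1$ and $F_2$, and show $E' = E$. As the get functors coincide it is enough to compare put functions, so fix $B \in \objectSet{\B}$ and $d \in \outSet{\C}{EB} = \outSet{\C}{E'B}$. Since $E'$ is itself a cofork, part~\ref{Necessary condition 1} applies with $G = E'$ and gives $\lift{E'}{B}d = \lift{E}{B}E\lift{E'}{B}d$. Because $E$ and $E'$ have the same get functor they act identically on morphisms, so $E\lift{E'}{B}d = E'\lift{E'}{B}d = d$ by \PutGet{} for $E'$. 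Hence $\lift{E'}{B}d = \lift{E}{B}d$, and therefore $E' = E$.

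I do not anticipate a genuine obstacle: the argument is two applications of \PutGet{} once the factorisation $G = H \compose E$ is in hand. The only point demanding care is the source-and-target bookkeeping, namely verifying that $\lift{H}{EB}d$ lands in $\outSet{\C}{EB}$ so that \PutGet{} for $E$ is applicable, and that the domains $\outSet{\C}{EB}$ and $\outSet{\C}{E'B}$ agree because $E$ and $E'$ share a get functor. Conceptually, part~\ref{Necessary condition 1} says that the put data of any cofork is recovered by pushing a chosen lift forward along $E$ and then lifting it back with the put functions of $E$, and part~\ref{Necessary condition 2} is just the special case $G = E'$, where this recovery pins the put functions down uniquely.
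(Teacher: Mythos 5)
Your proposal is correct and follows essentially the same route as the paper's own proof: part~\ref{Necessary condition 1} via the factorisation \(G = H \compose E\) from the universal property, the composite put formula, and \PutGet{} for \(E\); and part~\ref{Necessary condition 2} by specialising part~\ref{Necessary condition 1} to a lens above \(\forget E\) and using \PutGet{} for that lens. The bookkeeping points you flag (that \(\lift{H}{EB}d \in \outSet{\C}{EB}\), and that lenses over the same get functor agree on morphisms) are exactly the implicit steps in the paper's chain of equalities.
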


\begin{proof}
For \ref{Necessary condition 1}, if \(G \colon \B \to \D\) coforks \(F_1\) and \(F_2\), then there is a lens \(H \colon \C \to \D\) such that \(G = H \compose E\), and so
\(\lift{G}{B}d = \lift{(H \compose E)}{B}d = \lift{E}{B}\lift{H}{E B}d = \lift{E}{B}E\lift{E}{B}\lift{H}{E B} d = \lift{E}{B}E \lift{(H \compose E)}{B} d = \lift{E}{B}E \lift{G}{B}d\). For \ref{Necessary condition 2}, if \(G \colon \B \to \C\) is a lens above \(\forget E\) that coforks \(F_1\) and \(F_2\), then 
\(\lift{G}{B}c = \lift{E}{B}E\lift{G}{B}c = \lift{E}{B}G\lift{G}{B}c = \lift{E}{B}c\)
for each \(B \in \objectSet{\B}\) and each \(c \in \outSet{\C}{EB}\), and so \(G = E\).
\end{proof}

The first example shows that \(\Lens\) does not have all coequalisers, nor does \(\forget\) reflect them.

\begin{example}
\label{Lens doesn't have all coequalisers}
Let \(\A\) and \(\B\) be the preordered sets generated respectively by the following graphs.
\begin{align*}
\begin{tikzcd}[ampersand replacement=\&]
Y_1 \& X \arrow[l, "f_1" swap]\arrow[r, "f_2"]\arrow[d, "f"] \& Y_2\\
\&Y\&
\end{tikzcd}
&&
\begin{tikzcd}[ampersand replacement=\&]
Y'_1 \& X' \arrow[l, "f'_1" swap]\arrow[r, "f'_2"]\arrow[d, phantom] \& Y'_2\\
\&\phantom{Y}\&
\end{tikzcd}
\end{align*}
Let \(F_1, F_2 \colon \A \to \B\) be the unique lenses that both send \(X\) to \(X'\), \(Y_1\) to \(Y'_1\), \(Y_2\) to \(Y'_2\), and such that \(F_1 Y = Y'_1\), \(\lift{F_1}{X}f'_1 = f_1\), \(F_2 Y = Y'_2\), and \(\lift{F_2}{X}{f'_2} = f_2\). Let \(G \colon \B \to \intervalCat\) be the unique functor that sends \(X'\) to \(0\), and both \(Y'_1\) and \(Y'_2\) to \(1\); \(G\) coequalises \(\forget F_1\) and \(\forget F_2\) in \(\Cat\). There are only two lens structures on \(G\) that cofork \(F_1\) and \(F_2\) in \(\Lens\); one is determined by \(\lift{G_1}{X'}u = f'_1\) and the other by \(\lift{G_2}{X'}u = f'_2\). By \cref{Necessary condition for existence of coequaliser}, neither \(G_1\) nor \(G_2\) coequalises \(F_1\) and \(F_2\). Thus \(\forget\) does not reflect the coequaliser \(G\) of \(\forget F_1\) and \(\forget F_2\).

Actually \(F_1\) and \(F_2\) do not have a coequaliser in \(\Lens\). Assume that \(E \colon \B \to \C\) is such a coequaliser. Then \(Ef'_1 = EF_1f = EF_2f = Ef'_2\). As \(G_1\) coforks \(F_1\) and \(F_2\), there is a lens \(H \colon \C \to \intervalCat\) such that \(G_1 = H \compose E\). As \(HEX' = G_1X' \neq G_1Y'_1 = HEY'_1\), we must have \(EX' \neq EY'_1\). Hence \(EX'\) and \(EY'_1\) are distinct objects of the image of \(E\), and \(\id{EX'}\), \(Ef'_1\) and \(\id{EY'_1}\) are distinct morphisms of the image of \(E\). As \(E\) is a coequaliser, it is epi, and so, by \cref{Epic lenses are surjective on objects and morphisms}, its image is all of \(\C\). Thus \(\forget H\) is an isomorphism in \(\Cat\), and so \(H\) is an isomorphism in \(\Lens\). Hence \(G_1\) also coequalises \(F_1\) and \(F_2\), which is a contradiction.
\end{example}

There are even parallel pairs of lenses for which the coequaliser of their get functors has a unique lens structure that coforks them, and yet does not coequalise them.

\begin{example}
Let \(\A\), \(\B\) and \(\C\) be the preordered sets generated respectively by the following graphs.
\begin{align*}
\begin{tikzcd}[ampersand replacement=\&]
Z_1\&
X\arrow[d, "h" swap]\arrow[rr, bend right, "h_2" swap]\arrow[r, "f"]\arrow[l, "h_1" swap]\&
Y \arrow[r, "g"]\&
Z_2
\\
\&Z
\end{tikzcd}
&&
\begin{tikzcd}[ampersand replacement=\&]
Z'_1\&
X'\arrow[d, phantom]\arrow[rr, bend right, "h'_2" swap]\arrow[r, "f'"]\arrow[l, "h'_1" swap]\&
Y' \arrow[r, "g'"]\&
Z'_2
\\
\&\phantom{Z}
\end{tikzcd}
&&
\begin{tikzcd}[ampersand replacement=\&]
X''\arrow[d, phantom]\arrow[rr, bend right, "h''" swap]\arrow[r, "f''"]\&
Y'' \arrow[r, "g''"]\&
Z''
\\
\phantom{Z}
\end{tikzcd}
\end{align*}
Let \(F_1, F_2 \colon \A \to \B\) be the unique lenses that both send \(X\) to \(X'\), \(Y\) to \(Y'\), \(Z_1\) to \(Z'_1\), \(Z_2\) to \(Z'_2\), and such that \(F_1Z = Z'_1\), \(\lift{F_1}{X}h_1' = h_1\) and \(F_2Z = Z'_2\). Let \(E \colon \B \to \C\) be the unique lens that sends \(X'\) to \(X''\), \(Y'\) to \(Y''\), and both \(Z'_1\) and \(Z'_2\) to \(Z''\). Then \(\forget E\) coequalises \(\forget F_1\) and \(\forget F_2\) in \(\Cat\), and \(E\) coforks \(F_1\) and \(F_2\) in \(\Lens\). However, \(E\) does not coequalise \(F_1\) and \(F_2\) in \(\Lens\). Indeed, if \(G \colon \B \to \intervalCat\) is the unique lens that sends \(X'\) to \(0\), all of \(Y'\), \(Z'_1\) and \(Z'_2\) to \(1\), and for which \(\lift{G}{X'}u = h'_1\), then \(\lift{E}{X'}E\lift{G}{X'}u = \lift{E}{X'}Eh'_1 = \lift{E}{X'}h'' = h'_2 \neq h'_1 = \lift{G}{X'}u\).
\end{example}

The final example shows that \(\forget\) does not preserve coequalisers. It also shows that there are parallel pairs of lenses for which the coequaliser of their get functors has no lens structure that coforks them.

\begin{example}
\label{U doesn't preserve all coequalisers}
Let \(\A\) be the preordered set generated by the graph
\[\begin{tikzcd}
Y_1 & X \arrow[l, "f_1" swap] \arrow[r, "f_2"] & Y_2
\end{tikzcd}\]
Let \(I \colon \A \to \A\) denote the identity lens, and let \(S \colon \A \to \A\) be the unique lens that maps \(X\) to \(X\), \(Y_1\) to \(Y_2\) and \(Y_2\) to \(Y_1\). The coequaliser of \(\forget I\) and \(\forget S\) in \(\Cat\) is the unique functor \(E \colon \A \to \intervalCat\) that sends \(X\) to \(0\) and both \(Y_1\) and \(Y_2\) to \(1\). Recall that \(\terminalCat\) is terminal in \(\Lens\)~\cite{Clarke:2021:CategoryLens}. We claim that the coequaliser of \(I\) and~\(S\) in \(\Lens\) is the unique lens \(E \colon \A \to \terminalCat\). Let \(G \colon \A \to \C\) be a lens that coforks \(I\) and \(S\) in \(\Lens\). Let \(f = G f_1\). Then \(f = G f_1 = GI f_1 = GS f_1 = G f_2\). As \(\lift{G}{X}{f} \in \outSet{\A}{X}\), it is one of \(f_1\), \(f_2\) and \(\id{X}\). If \(\lift{G}{X}{f} = f_1\), then
\[f_1 = \lift{I}{X}{f_1} = \lift{I}{X}\lift{G}{X}{f} = \lift{(G \compose I)}{X}{f} = \lift{(G \compose S)}{X}{f} = \lift{S}{X}\lift{G}{X}{f} = \lift{S}{X}{f_1} = f_2,\]
which is a contradiction. We get a similar contradiction if \(\lift{G}{X}{f} = f_2\). By elimination, \(\lift{G}{X}{f} = \id{X}\), and so \(f = G\lift{G}{X}{f} = G\id{X} = \id{G X}\). The image of \(G\) thus consists of the object \(G X\) and the morphism \(\id{G X}\). If \(H \colon \terminalCat \to \C\) is a lens such that \(G = H \compose E\), then \(H\) must send \(0\) to \(G X\), and this uniquely determines~\(H\). As the image of any lens, in particular \(G\), is an out-degree-zero subcategory of its target category, this definition of \(H\) does indeed give a lens, and \(G = H \compose E\). Of course, the factorisation \(G = H \compose E\) is really the image factorisation of \(G\) from \cref{Factorisation System}.
\end{example}

\subsection*{Coequalisers which are reflected}
\label{Section: Coequalisers which are reflected}

Although the counterexamples above suggest that coequalisers in \(\Lens\) have little relation to those in \(\Cat\), we will see in \cref{Pushouts in Lens} and \cref{Epic lens is regular} two classes of coequalisers in \(\Lens\) which do lie over coequalisers in \(\Cat\). The following theorem, a partial converse to \cref{Necessary condition for existence of coequaliser}, reduces checking the coequaliser property in these cases to checking that \cref{Equation: Condition for reflection} below always holds.

\begin{theorem}
\label{Condition for reflection}
Let \(F_1, F_2 \colon \A \to \B\) be lenses. Let \(E \colon \B \to \C\) be a cofork of \(F_1\) and \(F_2\) in \(\Lens\), and suppose that \(\forget E\) coequalises \(\forget F_1\) and \(\forget F_2\) in \(\Cat\). Then \(E\) coequalises \(F_1\) and \(F_2\) in \(\Lens\) if and only if for all lenses \(G \colon \B \to \D\) that cofork \(F_1\) and \(F_2\) in \(\Lens\), all \(B \in \objectSet{\B}\) and all \(d \in \outSet{\D}{GB}\), we have
\begin{equation}
    \label{Equation: Condition for reflection}
    \lift{G}{B}d = \lift{E}{B}E\lift{G}{B}d.
\end{equation}
\end{theorem}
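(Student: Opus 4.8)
The forward implication is exactly part~\ref{Necessary condition 1} of \cref{Necessary condition for existence of coequaliser}, so the plan is to prove only the backward implication. Assume \eqref{Equation: Condition for reflection} holds and let \(G \colon \B \to \D\) be an arbitrary lens that coforks \(F_1\) and \(F_2\); I want to construct a unique lens \(H \colon \C \to \D\) with \(G = H \compose E\). For the get functor, since \(\forget G\) coforks \(\forget F_1\) and \(\forget F_2\) while \(\forget E\) coequalises them in \(\Cat\), there is a unique functor \(P \colon \C \to \D\) with \(\forget G = P \compose \forget E\), and I would set \(\forget H = P\). As \(\forget E\) is a coequaliser in \(\Cat\) it is epic, hence surjective on objects by \cref{Epic functors}, so every object of \(\C\) is of the form \(EB\); I would then try to define the put functions by \(\lift{H}{EB}d = E\lift{G}{B}d\) for \(d \in \outSet{\D}{GB}\), noting that \(P(EB) = GB\) so the codomain is right.

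The hard part will be checking that this formula is well defined, i.e.\ independent of the choice of \(B\) with \(EB = C\); I expect this to be the main obstacle, since it is the only place where the coequaliser structure in \(\Cat\) is genuinely used. Because the object-set functor is a left adjoint it preserves the coequaliser, so \(EB_1 = EB_2\) holds exactly when \(B_1\) and \(B_2\) are joined by a zigzag of the generating identifications \(F_1 A = F_2 A\) for \(A \in \objectSet{\A}\). Along such a zigzag \(GB\) is constant, so it suffices to treat a single step \(B_1 = F_1 A\), \(B_2 = F_2 A\) and show \(E\lift{G}{F_1 A}d = E\lift{G}{F_2 A}d\). The key computation is that, since \(G\) coforks \(F_1\) and \(F_2\), the morphism \(a = \lift{(G \compose F_1)}{A}d = \lift{F_1}{A}\lift{G}{F_1 A}d = \lift{F_2}{A}\lift{G}{F_2 A}d = \lift{(G \compose F_2)}{A}d\) is well defined; applying \PutGet{} for \(F_1\) and for \(F_2\) gives \(F_1 a = \lift{G}{F_1 A}d\) and \(F_2 a = \lift{G}{F_2 A}d\), and applying \(E\) together with the fact that \(E\) coforks \(F_1\) and \(F_2\) (so \(E F_1 a = E F_2 a\)) yields the desired equality.

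Once well-definedness is secured, the remaining checks should be routine. The \PutGet{}, \PutId{} and \PutPut{} laws for \(H\) would each reduce, via \(P \compose \forget E = \forget G\) and functoriality of \(\forget E\), to the corresponding law for \(G\). The equation \(G = H \compose E\) then splits into equality of get functors, true by construction of \(P\), and equality of put functions, which reads \(\lift{(H \compose E)}{B}d = \lift{E}{B}\lift{H}{EB}d = \lift{E}{B}E\lift{G}{B}d\); this is precisely where hypothesis \eqref{Equation: Condition for reflection} is invoked to conclude it equals \(\lift{G}{B}d\). Finally, uniqueness is immediate: any lens \(H'\) with \(G = H' \compose E\) has \(\forget H' = P\) since \(\forget E\) is epic, and applying \(E\) to \(\lift{G}{B}d = \lift{E}{B}\lift{H'}{EB}d\) and using \PutGet{} for \(E\) forces \(\lift{H'}{EB}d = E\lift{G}{B}d\), so \(H' = H\). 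Assembling these pieces exhibits \(E\) as the coequaliser of \(F_1\) and \(F_2\) in \(\Lens\).
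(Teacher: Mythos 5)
Your proposal is correct and takes essentially the same approach as the paper: the forward direction is \cref{Necessary condition for existence of coequaliser}, and the backward direction builds the comparison lens via \(\lift{H}{EB}d = E\lift{G}{B}d\), proves well-definedness by induction over the equivalence relation generated by \(F_1A \simeq F_2A\) using the same single-step computation (cofork property of \(G\), \PutGet{}, cofork property of \(E\)), and invokes \cref{Equation: Condition for reflection} exactly where the paper does to conclude \(G = H \compose E\), with the same uniqueness argument. The only cosmetic difference is that you justify the object-level identification (\(EB_1 = EB_2\) precisely when \(B_1\) and \(B_2\) are linked by a zigzag of identifications \(F_1A = F_2A\)) by observing that the object-set functor is a left adjoint and so preserves coequalisers, where the paper instead cites Bednarczyk et al.'s result on generalised congruences.
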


In the proof of the following lemma and again, later, in the proof of \cref{Lemma: Pushout discrete opfibration and cosieve}, we use the induction principle for the equivalence relation \(\simeq\) on a set \(S\) generated by a binary relation \(R\) on \(S\), that is,
\begin{equation}
\label{Induction principle}
    \forall P \; x_0 \; y_0.\ 
    \left[
    \begin{aligned}
    &x_0 \simeq y_0\\
    &\quad \land \quad \forall x \; y. \:\: x \mathrel{R} y \implies P (x, y)\\
    &\quad \land \quad \forall x.\:\:  P(x, x)\\
    &\quad \land \quad \forall x \; y. \:\:  \brack{x \simeq y \;\land\; P(x, y)} \implies P(y, x)\\
    &\quad \land \quad \forall x \; y \; z. \:\:  \brack{x \simeq y \;\land\; P(x, y) \;\land\; y \simeq z \;\land\; P (y, z)} \implies P(x, z)
    \end{aligned}
    \right]
    \implies P(x_0, y_0).
\end{equation}

\begin{lemma}
\label{Comparison lens}
Let \(F_1, F_2 \colon \A \to \B\) be lenses. Let \(E \colon \B \to \C\) be a cofork of \(F_1\) and \(F_2\) in \(\Lens\), and suppose that \(\forget E\) coequalises \(\forget F_1\) and \(\forget F_2\) in \(\Cat\). Let \(G \colon \B \to \D\) be a lens that coforks \(F_1\) and \(F_2\) in \(\Lens\), and let \(H \colon \C \to \D\) be the unique functor such that \(\forget G = H \compose \forget E\). Then there is a unique lens structure on \(H\) that, for all \(B \in \objectSet{\B}\) and all \(d \in \outSet{\D}{GB}\), satisfies the equation
\begin{equation}
    \label{Equation: Definition of put}
    \lift{H}{EB}d = E\lift{G}{B}d.
\end{equation}
\end{lemma}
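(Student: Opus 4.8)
The plan is to dispatch uniqueness first and then to build the put functions of $H$ via the formula \eqref{Equation: Definition of put}, the real work being to show this formula is well defined and obeys the three lens laws. Since $\forget E$ coequalises $\forget F_1$ and $\forget F_2$ in $\Cat$, it is epic, hence surjective on objects by \cref{Epic functors}; thus every object of $\C$ has the form $EB$, and every $d \in \outSet{\D}{HEB}$ lies in $\outSet{\D}{GB}$ because $HEB = GB$. Consequently \eqref{Equation: Definition of put} pins down $\lift{H}{C}$ on every object $C$ and every morphism out of $HC$, so at most one lens structure can satisfy it; this gives uniqueness. It remains to check that the assignment it describes is a genuine, well-defined lens structure.

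The main obstacle is well-definedness: if $EB_1 = EB_2$, then $GB_1 = HEB_1 = HEB_2 = GB_2$, and I must show $E\lift{G}{B_1}d = E\lift{G}{B_2}d$ for every $d \in \outSet{\D}{GB_1}$. Because $\forget E$ is the coequaliser, $EB_1 = EB_2$ holds exactly when $B_1$ and $B_2$ are related by the equivalence relation $\simeq$ on $\objectSet{\B}$ generated by $F_1 A \mathrel{R} F_2 A$ for $A \in \objectSet{\A}$, so I would prove the statement $P(B_1, B_2)$, namely ``$GB_1 = GB_2$ and $E\lift{G}{B_1}d = E\lift{G}{B_2}d$ for all $d \in \outSet{\D}{GB_1}$'', by the induction principle \eqref{Induction principle}. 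Reflexivity, symmetry and transitivity of $P$ follow from those of equality, the conjunct $GB_1 = GB_2$ ensuring that the relevant out-sets coincide. The generating case $P(F_1 A, F_2 A)$ is where the lens laws do the work: the equality $G \compose F_1 = G \compose F_2$ of composite lenses gives $GF_1 A = GF_2 A$ at the object level, and, writing $b_i = \lift{G}{F_i A}d$, it also gives $\lift{F_1}{A}b_1 = \lift{F_2}{A}b_2$; calling this common value $a$, \PutGet{} yields $b_1 = F_1 a$ and $b_2 = F_2 a$, whence $Eb_1 = EF_1 a = EF_2 a = Eb_2$ because $E$ coforks $F_1$ and $F_2$. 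This settles well-definedness.

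Finally, the three lens laws for $H$ reduce to the corresponding laws for $G$ together with $HE = G$ and the functoriality of $E$. For \PutGet{}, $H\lift{H}{EB}d = HE\lift{G}{B}d = G\lift{G}{B}d = d$. For \PutId{}, $\lift{H}{EB}\id{HEB} = E\lift{G}{B}\id{GB} = E\id{B} = \id{EB}$. For \PutPut{}, taking $B' = \target\lift{G}{B}d$ so that $EB' = \target\lift{H}{EB}d$, functoriality of $E$ and \PutPut{} for $G$ give $\lift{H}{EB}(d' \compose d) = E\lift{G}{B}(d' \compose d) = E\lift{G}{B'}d' \compose E\lift{G}{B}d = \lift{H}{EB'}d' \compose \lift{H}{EB}d$. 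I expect no difficulty in this last paragraph; the entire content of the lemma sits in the well-definedness argument, whose crux is the observation that $\lift{G}{F_i A}d = F_i a$ collapses the comparison to the already-known identification $EF_1 = EF_2$.
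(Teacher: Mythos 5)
Your proof is correct and follows essentially the same route as the paper's: uniqueness from surjectivity on objects of \(\forget E\), well-definedness by induction over the equivalence relation \(\simeq\) generated by \(F_1 A \simeq F_2 A\) (using the standard description of coequalisers in \(\Cat\), which the paper justifies by citing Bednarczyk), with the generating case handled by exactly the paper's computation — \PutGet{} for the \(F_i\), the composite-lens put formula, and the two cofork equations — and then the same verification of the three lens laws. Your explicit inclusion of \(GB_1 = GB_2\) in the induction predicate is a minor presentational refinement, not a different argument.
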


\begin{proof}
For each \(C \in \objectSet{\C}\), as \(\forget E\) is epic, there is a \(B \in \objectSet{\B}\) such that \(EB = C\). Hence, we may define \(\lift{H}{C}\) using \cref{Equation: Definition of put}, so long as, for all \(B_1, B_2 \in \objectSet{\B}\), if \(E B_1 = E B_2\) then, for all \(d \in \outSet{\D}{E B_1}\), we have \(E\lift{G}{B_1} d = E \lift{G}{B_2} d\). Let \(\simeq\) be the smallest equivalence relation on \(\objectSet{\B}\) such that \(F_1A \simeq F_2A\) for all \(A \in \objectSet{\A}\). As \(\forget E\) coequalises \(\forget F_1\) and \(\forget F_2\) in \(\Cat\), we have~\cite[Proposition~4.1]{Bednarczyk:1999:GeneralizedCongruences}, for all \(B_1, B_2 \in \objectSet{\B}\), that \(E B_1 = E B_2\) if and only if \(B_1 \simeq B_2\). We proceed using the induction principle in \cref{Induction principle}. The proof obligations from the reflexivity, symmetry and transitivity axioms for \(\simeq\) hold as~\(=\) is an equivalence relation. For the remaining one, for all \(A \in \objectSet{\A}\) and all \(d \in \outSet{\D}{F_1 A}\), we have
\[E\lift{G}{F_1 A} d
    = EF_1\lift{F_1}{A}\lift{G}{F_1 A} d
    = (E \compose F_1)\lift{(G \compose F_1)}{A} d
    = (E \compose F_2)\lift{(G \compose F_2)}{A} d
    = EF_2\lift{F_2}{A} \lift{G}{F_2 A}d
    = E\lift{G}{F_2 A} d.\]

Define \(\lift{H}{C}\) using \cref{Equation: Definition of put}. It remains to check that the lens laws hold for \(H\). For all \(C \in \objectSet{\C}\), there is a \(B \in \objectSet{\B}\) such that \(EB = C\), and \(\lift{H}{C}\id{HC} = E\lift{G}{B}\id{GB} = E\id{B} = \id{C}\); hence \PutId{} holds. For all \(C \in \objectSet{\C}\), all \(d \in \outSet{\D}{HC}\) and all \(d' \in \outSet{\D}{\target d}\), there is a \(B \in \objectSet{\B}\) such that \(EB = C\), and
\[\lift{H}{C}(d' \compose d)
= E\lift{G}{B}(d' \compose d)
= E\paren[\big]{\lift{G}{B'} d' \compose \lift{G}{B} d}
= E\lift{G}{B'}d' \compose E\lift{G}{B} d
= \lift{H}{C'} d' \compose \lift{H}{C} d,\]
where \(B' = \target \lift{G}{B} d\) and \(C' = EB'\); hence \PutPut{} holds. Finally, for all \(C \in \objectSet{\C}\) and all \(d \in \outSet{\D}{HC}\), there is a \(B \in \objectSet{\B}\) such that \(EB = C\), and \(H\lift{H}{C} d = HE\lift{G}{B} d = G\lift{G}{B} d = d\); hence \PutGet{} holds.
\end{proof}

\begin{proof}[Proof of \cref{Condition for reflection}.]
We proved the \textit{only if} direction in \cref{Necessary condition for existence of coequaliser}. For the \textit{if} direction, suppose, for all lenses \(G \colon \B \to \D\) that cofork \(F_1\) and \(F_2\), that \cref{Equation: Condition for reflection} always holds. We must show that \(E\) is the universal cofork of \(F_1\) and \(F_2\) in \(\Lens\). Let \(G \colon \B \to \D\) be another cofork of \(F_1\) and \(F_2\) in \(\Lens\). Suppose that there is a lens \(H \colon \C \to \D\) such that \(G = H \compose E\). Then \(\forget G = \forget H \compose \forget E\), and so \(\forget H\) is the unique functor that composes with \(\forget E\) to give \(\forget G\). Let \(C \in \objectSet{\C}\) and \(d \in \outSet{\D}{HC}\). As \(\forget E\) is epic, there is a \(B \in \objectSet{\B}\) such that \(EB = C\). Then \(\lift{H}{C}d = E\lift{E}{B}\lift{H}{C}d = E \lift{(H \compose E)}{B}d = E \lift{G}{B}d\). Hence \(H\) is uniquely determined. Now let \(H \colon \C \to \D\) be the lens defined as in \cref{Comparison lens}. For all \(B \in \objectSet{\B}\) and all \(d \in \outSet{\D}{GB}\), we have \(\lift{G}{B}d = \lift{E}{B}E\lift{G}{B}d = \lift{E}{B}\lift{H}{EB}d = \lift{(H \compose E)}{B} d\), and so \(G = H \compose E\).
\end{proof}

\begin{corollary}
\label{Sufficient condition for reflection}
Let \(F_1, F_2 \colon \A \to \B\) be lenses. Let \(E \colon \B \to \C\) be a cofork of \(F_1\) and \(F_2\) in \(\Lens\), and suppose that \(\forget E\) coequalises \(\forget F_1\) and \(\forget F_2\). If \(\forget E\) is a discrete opfibration then \(E\) coequalises \(F_1\) and \(F_2\).
\end{corollary}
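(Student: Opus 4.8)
The plan is to invoke \cref{Condition for reflection}. Its hypotheses---that \(E\) coforks \(F_1\) and \(F_2\) in \(\Lens\) and that \(\forget E\) coequalises \(\forget F_1\) and \(\forget F_2\) in \(\Cat\)---are precisely the hypotheses assumed here, so it remains only to verify that \cref{Equation: Condition for reflection} holds for every lens \(G \colon \B \to \D\) that coforks \(F_1\) and \(F_2\), every \(B \in \objectSet{\B}\) and every \(d \in \outSet{\D}{GB}\).

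First I would set \(b = \lift{G}{B}d\), observing that \(b\) is a morphism out of \(B\) in \(\B\); the goal \cref{Equation: Condition for reflection} then reads \(b = \lift{E}{B}Eb\). The key observation is that both sides are morphisms out of \(B\) in \(\B\) that \(E\) sends to the same morphism \(Eb\) out of \(EB\) in \(\C\): this is immediate for the left-hand side, and for the right-hand side it is exactly \PutGet{} for \(E\), namely \(E\lift{E}{B}Eb = Eb\). Since \(\forget E\) is a discrete opfibration, there is by definition a unique morphism out of \(B\) lying over \(Eb\), and so \(b = \lift{E}{B}Eb\). This is \cref{Equation: Condition for reflection}, and \cref{Condition for reflection} then yields that \(E\) coequalises \(F_1\) and \(F_2\).

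I do not anticipate any real obstacle, as the whole argument is a direct application of \cref{Condition for reflection} together with the defining uniqueness property of a discrete opfibration. The only point worth flagging is that the verification of \cref{Equation: Condition for reflection} uses nothing about \(G\) beyond the fact that \(\lift{G}{B}d\) is \emph{some} morphism out of \(B\): indeed the identity \(b = \lift{E}{B}Eb\) holds for every morphism \(b\) out of \(B\), so once \(\forget E\) is a discrete opfibration the condition is satisfied automatically for all coforks \(G\) at once.
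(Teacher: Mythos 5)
Your proposal is correct and follows exactly the paper's own argument: both proofs verify \cref{Equation: Condition for reflection} by noting that \(\lift{G}{B}d\) and \(\lift{E}{B}E\lift{G}{B}d\) are both lifts of \(E\lift{G}{B}d\) to \(\outSet{\B}{B}\) (the latter via \PutGet{}), and then invoking the uniqueness of lifts along a discrete opfibration before applying \cref{Condition for reflection}. Your closing remark that the argument is uniform in \(G\) is a nice observation but does not change the substance.
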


\begin{proof}
Let \(G \colon \B \to \D\) be a lens that coforks \(F_1\) and \(F_2\), let \(B \in \objectSet{\B}\) and let \(d \in \outSet{\D}{G B}\). Then \(\lift{G}{B}d\) and \(\lift{E}{B}E\lift{G}{B}d\) are both elements of \(\outSet{\B}{B}\) which are sent by \(E\) to the same morphism \(E\lift{G}{B}d\) of \(\C\). If \(\forget E\) is a discrete opfibration, then \(E\lift{G}{B}d\) has a unique lift to \(\outSet{\B}{B}\), and so  \(\lift{G}{B}d\) and \(\lift{E}{B}E\lift{G}{B}d\) must be equal.
\end{proof}

\section{Pushouts of discrete opfibrations along monos}
\label{Section: Pushouts of discrete opfibrations along monos}

In the proof that \(\forget\) preserves epis (\cref{U preserves epis}), we used the well-known result that cosieves are pushout stable to explain why the pushout in \(\Cat\) of the get functors of a span of monic lenses lifts uniquely to a commutative square in \(\Lens\); this lifted square is actually a pushout square in \(\Lens\). In this section, we will show, more generally, that \(\Lens\) has pushouts of discrete opfibrations along monics, and that \(\forget\) creates these pushouts. In what follows, we use square brackets for equivalence classes of elements.

Fritsch and Latch~\cite[Proposition~5.2]{FritschLatch:1981:HomotopyInversesForNerve} explicitly construct the pushout in \(\Cat\) of a functor along a full monic functor. Specialising to when the full monic functor is a cosieve, and recalling that the image of a cosieve is out-degree-zero, we obtain the following simplification of Fritsch~and~Latch's construction. 

\begin{proposition}
\label{Pushout of cosieve in Cat}
Let \(F \colon \A \to \C\) be a functor and \(J \colon \A \to \B\) be a cosieve. Then
\[\begin{tikzcd}
\A \arrow[r, tail, "J"] \arrow[d, "F" swap] & \B \arrow[d, "\Fbar"]\\
\C \arrow[r, tail, "\Jbar" swap] & \D
\end{tikzcd}\]
is a pushout square in \(\Cat\) and \(\Jbar\) is a cosieve, where \(\D\), \(\Fbar\) and \(\Jbar\) are defined as follows:
\begin{itemize}
    \item \emph{Object set:}
    \[\objectSet{\D} = \objectSet{\C} \sqcup \paren[\big]{\objectSet{\B} \setDifference \objectSet{\A}}\]
    \item \emph{Hom-sets:} for all \(C_1, C_2 \in \objectSet{\C}\) and all \(B_1, B_2 \in \objectSet{\B} \setDifference \objectSet{\A}\),
    \begin{align*}
        \homSet{\D}{C_1}{C_2} &= \homSet{\C}{C_1}{C_2}&
        \homSet{\D}{C_1}{B_2} &= \emptyset\\
        \homSet{\D}{B_1}{B_2} &= \homSet{\B}{B_1}{B_2}&
        \homSet{\D}{B_1}{C_2} &= \paren[\big]{\Coprod_{A \in \objectSet{\A}} \homSet{\C}{FA}{C_2} \times \homSet{\B}{B_1}{A}} \big\slash {\sim}
    \end{align*}
    where \(\sim\) is the equivalence relation on \(\smallcoprod_{A \in \objectSet{\A}} \homSet{\C}{FA}{C_2} \times \homSet{\B}{B_1}{A}\) generated by \((c, a\compose b) \sim (c \compose F a, b)\) for all \(A_1, A_2 \in \objectSet{\A}\), all \(b \in \homSet{\B}{B_1}{A_1}\), all \(a \in \homSet{\A}{A_1}{A_2}\) and all \(c \in \homSet{\C}{FA_2}{C_2}\).
    \item \emph{Composition:} for all \(B_1, B_2, B_3 \in \objectSet{\B} \setDifference \objectSet{\A}\), all \(A \in \objectSet{\A}\), all \(C_1, C_2, C_3 \in \objectSet{\C}\), all \(b_1 \in \homSet{\D}{B_1}{B_2}\), all \(b_2 \in \homSet{\D}{B_2}{B_3}\), all \(a \in \homSet{\D}{B_2}{A}\), all \(c \in \homSet{\D}{FA}{C_2}\), all \(c_1 \in \homSet{\D}{C_1}{C_2}\) and all \(c_2 \in \homSet{\D}{C_2}{C_3}\),
    \begin{align*}
        b_2 \compose_\D b_1 &= b_2 \compose_\B b_1&
        \brack{(c, a)} \compose_\D b_1 &= \brack{(c, a \compose_\B b_1)}\\
        c_2 \compose_\D c_1 &= c_2 \compose_\C c_1 &
        c_2 \compose_\D \brack{(c, a)} &= \brack{(c_2 \compose_\C c, a)}
    \end{align*}
    \item \emph{Identity morphisms:} same as in \(\B\) and \(\C\).
    \item \emph{Injections:} the functor \(\Jbar \colon \C \to \D\) is the obvious inclusion of \(\C\) as a full subcategory of \(\D\); the functor \(\Fbar \colon \B \to \D\) is defined, for all \(B, B' \in \objectSet{\B} \setDifference \objectSet{\A}\), all \(A, A' \in \objectSet{\A}\), all \(b \in \homSet{\B}{B}{B'}\), all \(b' \in \homSet{\B}{B}{A}\) and all \(a \in \homSet{\B}{A}{A'}\), as follows:
    \begin{align*}
        \Fbar B &= B &&& \Fbar A &= F A \\
        \Fbar b &= b & \Fbar b' &= \brack{(\id{FA}, b')} & \Fbar a &= F a
    \end{align*}
\end{itemize}
\end{proposition}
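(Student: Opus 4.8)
The plan is to verify the universal property of the pushout directly; this is self-contained, and the construction is most naturally checked against that property, since the hom-set $\homSet{\D}{B_1}{C_2}$ is precisely the coend $\int^{A} \homSet{\C}{FA}{C_2} \times \homSet{\B}{B_1}{A}$, with $\sim$ its defining dinaturality relation. The one structural fact that makes this specialisation of Fritsch and Latch's construction valid, and that I would record first, is that because $J$ is a cosieve its image is out-degree-zero: there are no morphisms in $\B$ from an object of (the image of) $\A$ to an object outside it. This is what forces $\homSet{\D}{C_1}{B_2} = \emptyset$, and, together with the generating relation being indexed only by $\A$-morphisms, it is what keeps every composite in $\D$ well-typed; a composite that would place a $\C$-object before a new $\B$-object never arises.

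Next I would check that $\D$ is a well-defined category. The only nonroutine point is that composition descends to the quotient hom-sets: precomposing $\brack{(c,a)}$ with a $\B$-morphism and postcomposing with a $\C$-morphism each respect the generating relation $(c, a \compose b) \sim (c \compose Fa, b)$, which follows by applying that same relation one step further out. Associativity and unitality then reduce to the corresponding laws in $\B$ and $\C$ together with functoriality of $F$. That $\Jbar$ and $\Fbar$ are functors is immediate from the composition rules, and the square commutes because on an object $A \in \objectSet{\A}$ both legs give $FA$, and on a morphism $a$ of $\A$ both give $Fa$.

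For the universal property, let $P \colon \C \to \E$ and $Q \colon \B \to \E$ satisfy $P \compose F = Q \compose J$. The key observation for uniqueness is that every morphism $\brack{(c,b)} \in \homSet{\D}{B_1}{C_2}$ factors as $\Jbar c \compose \Fbar b$, since $c \compose_\D \brack{(\id{FA}, b)} = \brack{(c,b)}$; hence any functor $R$ with $R \compose \Jbar = P$ and $R \compose \Fbar = Q$ must send $\brack{(c,b)}$ to $Pc \compose Qb$, and is forced to agree with $P$ on $\C$ and with $Q$ on the new $\B$-objects. For existence I would define $R$ by exactly these formulas and check well-definedness against $\sim$: the two sides of the generating relation are sent to $Pc \compose Q(a \compose b)$ and $P(c \compose Fa) \compose Qb$, which agree because $Q(Ja) = P(Fa)$ by the hypothesis $P \compose F = Q \compose J$. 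Functoriality of $R$ is then a direct check against the four composition rules.

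Finally, $\Jbar$ is a cosieve: it is injective on objects because $\objectSet{\D}$ has $\objectSet{\C}$ as a disjoint summand, and it is a discrete opfibration because every morphism of $\D$ out of a $\C$-object $C$ lies in $\homSet{\C}{C}{C_2}$, the hom-set $\homSet{\D}{C}{B_2}$ being empty, and so has a unique $\Jbar$-preimage. The main obstacle is the coend bookkeeping in the middle two paragraphs, namely showing simultaneously that composition in $\D$ is well-defined and associative on the quotiented hom-sets and that the induced $R$ respects $\sim$. Both hinge on the single generating relation interacting correctly with the composition rules and on the identity $P \compose F = Q \compose J$, so once these are set up carefully the remaining verifications are mechanical.
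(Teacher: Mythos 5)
Your proof is correct, but it takes a genuinely different route from the paper. The paper does not verify the universal property at all: it obtains the proposition by citing Fritsch and Latch's explicit construction of the pushout in \(\Cat\) of a functor along a \emph{full monic} functor, and then specialising to the case where the full mono is a cosieve, using the fact that the image of a cosieve is out-degree-zero to simplify their formulas. You instead give a self-contained verification: identifying \(\homSet{\D}{B_1}{C_2}\) as the coend \(\int^{A} \homSet{\C}{FA}{C_2} \times \homSet{\B}{B_1}{A}\) with \(\sim\) as the dinaturality relation, checking that composition descends to the quotient, establishing uniqueness of the mediating functor via the factorisation \(\brack{(c,b)} = \Jbar c \compose \Fbar b\), and establishing existence by checking that \(R\brack{(c,b)} = Pc \compose Qb\) respects \(\sim\) precisely because \(P \compose F = Q \compose J\). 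Both arguments are sound; the paper's is shorter and leans on the literature, while yours makes explicit exactly where the cosieve hypothesis enters (the out-degree-zero image forces \(\homSet{\D}{C_1}{B_2} = \emptyset\) and keeps all composites well-typed) and supplies the verification that the paper leaves implicit in the word ``specialising.'' One small caveat: functoriality of \(\Fbar\) is not quite ``immediate from the composition rules'' --- the mixed case \(\Fbar(a \compose b')\) with \(b' \colon B \to A\) and \(a \colon A \to A'\) yields \(\brack{(\id{FA'}, a \compose b')}\) on one side and \(\brack{(Fa, b')}\) on the other, and these agree only by invoking the generating relation \(\sim\) itself; this is mechanical but worth recording, since it is the same interplay you rely on elsewhere.
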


\begin{theorem}
\label{Pushout discrete opfibration and cosieve}
The pushout in \(\Cat\) of a discrete opfibration along a cosieve is a discrete opfibration.
\end{theorem}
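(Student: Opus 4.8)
The plan is to show that the functor $\Fbar \colon \B \to \D$ produced by \cref{Pushout of cosieve in Cat}, the pushout of the discrete opfibration $F \colon \A \to \C$ along the cosieve $J \colon \A \to \B$, is again a discrete opfibration, working directly with the explicit description of $\D$ and $\Fbar$ given there. Using that $J$ is injective on objects, I identify $\objectSet{\A}$ with a subset of $\objectSet{\B}$. Then I fix an object $B$ of $\B$ and a morphism $d \in \outSet{\D}{\Fbar B}$ and seek a unique $b \in \outSet{\B}{B}$ with $\Fbar b = d$, splitting the argument according to whether $B \in \objectSet{\A}$ and, in the remaining case, according to the type of the target of $d$.

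When $B \in \objectSet{\A}$, we have $\Fbar B = FB \in \objectSet{\C}$, and the hom-set formulas of \cref{Pushout of cosieve in Cat} give $\outSet{\D}{FB} = \outSet{\C}{FB}$, while the out-degree-zero property of the cosieve $J$ gives $\outSet{\B}{B} = \outSet{\A}{B}$, on which $\Fbar$ agrees with $F$. The required unique lift is then exactly $\lift{F}{B}d$, the unique lift supplied by the discrete opfibration $F$. When instead $B \in \objectSet{\B} \setDifference \objectSet{\A}$, I treat two sub-cases. If $d$ targets an object of $\objectSet{\B} \setDifference \objectSet{\A}$, then $d \in \homSet{\B}{B}{B_2}$; any morphism out of $B$ whose target lies in $\objectSet{\A}$ has $\Fbar$-image with target in $\objectSet{\C}$ and so cannot equal $d$, while on the remaining morphisms $\Fbar$ is the identity, forcing $b = d$. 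If $d$ targets an object $C_2 \in \objectSet{\C}$, then $d = \brack{(c, b')}$ for some $b' \colon B \to A$ in $\B$ and $c \colon FA \to C_2$ in $\C$; setting $A^* = \target \lift{F}{A}c$ and $b = \lift{F}{A}c \compose b'$, the generating relation of $\sim$ yields $\Fbar b = \brack{(\id{FA^*}, \lift{F}{A}c \compose b')} = \brack{(c, b')} = d$, so a lift exists.

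The main obstacle will be uniqueness in this last sub-case, which requires controlling the equivalence relation $\sim$. I would introduce the map $\Phi$ sending a pair $(c, b')$, with $b' \colon B \to A$ in $\B$ and $c \colon FA \to C_2$ in $\C$, to the morphism $\lift{F}{A}c \compose b'$ of $\B$, and show by the induction principle of \cref{Induction principle} that $\Phi$ respects $\sim$. Respecting the generator $(c, a \compose b') \sim (c \compose Fa, b')$, where $a \colon A_1 \to A_2$, reduces to the identity $\lift{F}{A_1}(c \compose Fa) = \lift{F}{A_2}c \compose a$, which follows from \PutPut{} together with the fact that $\lift{F}{A_1}Fa = a$ (the lift of $Fa$ is unique because $F$ is a discrete opfibration).

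Finally, the induced map $\bar\Phi$ on equivalence classes sends $\brack{(\id{FA^*}, \beta)}$ back to $\beta$, since $\lift{F}{A^*}\id{FA^*} = \id{A^*}$ by \PutId{}. Consequently, if $b_1 \colon B \to A_1$ and $b_2 \colon B \to A_2$ are two lifts of a $C$-typed $d$, so that $\brack{(\id{FA_1}, b_1)} = \Fbar b_1 = \Fbar b_2 = \brack{(\id{FA_2}, b_2)}$, then applying $\bar\Phi$ gives $b_1 = b_2$. This disposes of the only non-routine case and completes the verification that $\Fbar$ is a discrete opfibration.
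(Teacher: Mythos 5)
Your proposal is correct and follows essentially the same route as the paper's own proof: the same three-way case split on $B$ and $\target d$, the same existence computation, and the same induction over the generated equivalence relation $\sim$, with your claim that $\Phi$ respects $\sim$ being exactly the paper's \cref{Lemma: Pushout discrete opfibration and cosieve} (the key identity $\lift{F}{A_1}(c \compose Fa) = \lift{F}{A_2}c \compose a$ appears verbatim in both). Packaging the uniqueness step as a retraction $\bar\Phi$ on equivalence classes, rather than applying the lemma directly to $(\id{}, b_2) \sim (c_1, b_1)$, is only a cosmetic difference.
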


\begin{lemma}
\label{Lemma: Pushout discrete opfibration and cosieve}
Let \(F \colon \A \to \C\) be a discrete opfibration, let \(J \colon \A \to \B\) be a cosieve, let \(B \in \objectSet{\B} \setDifference \objectSet{\A}\) and let \(C \in \objectSet{\C}\). Then, for all \(A_1, A_2 \in \A\), all \(b_1 \in \homSet{\B}{B}{A_1}\), all \(b_2 \in \homSet{\B}{B}{A_2}\), all \(c_1 \in \homSet{\C}{FA_1}{C}\) and all \(c_2 \in \homSet{\C}{FA_2}{C}\), if \((c_1, b_1) \sim (c_2, b_2)\) then \(\lift{F}{A_1}c_1 \compose b_1 = \lift{F}{A_2}c_2 \compose b_2\).
\end{lemma}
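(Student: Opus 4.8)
The plan is to prove the lemma by induction on the derivation of $(c_1, b_1) \sim (c_2, b_2)$, using the induction principle \eqref{Induction principle} for the equivalence relation $\sim$ generated by the relation $(c, a \compose b) \mathrel{R} (c \compose Fa, b)$. I would take the predicate $P$ to send a pair $((c_1, b_1), (c_2, b_2))$ to the proposition $\lift{F}{A_1}c_1 \compose b_1 = \lift{F}{A_2}c_2 \compose b_2$, where $A_1 = \target b_1$ and $A_2 = \target b_2$. Note that $\lift{F}{A_i}c_i$, which a priori is a morphism of $\A$, may be composed with $b_i \in \homSet{\B}{B}{A_i}$ in $\B$ because $J$ exhibits $\A$ as a subcategory of $\B$; the resulting morphisms both have source $B$, so the two sides of the equation defining $P$ are always parallel candidates. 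Since $P$ is exactly the pullback of equality of morphisms along the assignment $(c, b) \mapsto \lift{F}{\target b}c \compose b$, it is automatically reflexive, symmetric and transitive, so the reflexivity, symmetry and transitivity obligations of \eqref{Induction principle} are discharged for free.

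This leaves only the obligation coming from the generating relation $R$. Here I would fix $A, A' \in \objectSet{\A}$, $b \in \homSet{\B}{B}{A}$, $a \in \homSet{\A}{A}{A'}$ and $c \in \homSet{\C}{FA'}{C}$, and verify the equation $\lift{F}{A'}c \compose a \compose b = \lift{F}{A}(c \compose Fa) \compose b$ corresponding to $(c, a \compose b) \mathrel{R} (c \compose Fa, b)$. It clearly suffices to establish $\lift{F}{A}(c \compose Fa) = \lift{F}{A'}c \compose a$. Applying \PutPut{} to the composite $c \compose Fa$ out of $FA$ gives $\lift{F}{A}(c \compose Fa) = \lift{F}{A''}c \compose \lift{F}{A}(Fa)$, where $A'' = \target \lift{F}{A}(Fa)$, so it remains to identify $\lift{F}{A}(Fa)$ with $a$.

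This last identification is where the hypothesis that $F$ is a \emph{discrete opfibration}, rather than merely a lens, is essential, and I expect it to be the only real point of the proof: for a general lens the put of a get need not recover the original morphism, but here $a$ is the unique morphism out of $A$ sent by $F$ to $Fa$, so uniqueness of lifts forces $\lift{F}{A}(Fa) = a$ and hence $A'' = A'$. Substituting back yields $\lift{F}{A}(c \compose Fa) = \lift{F}{A'}c \compose a$, completing the generating obligation and, with it, the induction. The remaining work is purely the bookkeeping of checking that the composites are well-formed and that the sources and targets match up.
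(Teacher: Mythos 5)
Your proposal is correct and matches the paper's own proof: both proceed by the induction principle \eqref{Induction principle} applied to \(\sim\), dispose of the reflexivity, symmetry and transitivity obligations by noting that equality is an equivalence relation, and settle the generating-relation obligation by combining \PutPut{} with the observation that \(\lift{F}{A}Fa = a\) because \(F\) is a discrete opfibration. The only difference is cosmetic (the paper substitutes \(a = \lift{F}{A_1}Fa\) first and then applies \PutPut{}, whereas you apply \PutPut{} first and then identify \(\lift{F}{A}Fa\) with \(a\)).
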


\begin{proof}
We proceed by induction, using the induction principle for \(\sim\) in \cref{Induction principle}. The proof obligations from the reflexivity, symmetry and transitivity axioms for \(\sim\) hold because \(=\) is an equivalence relation. For the remaining proof obligation, for all \(A_1, A_2 \in \objectSet{\A}\), all \(b \in \homSet{\B}{B}{A_1}\), all \(a \in \homSet{\A}{A_1}{A_2}\) and all \(c \in \homSet{\C}{FA_2}{C}\), we have \(\lift{F}{A_1}Fa = a\) as \(F\) is a discrete opfibration, and so
\[\lift{F}{A_2}c \compose (a \compose b) = \lift{F}{A_2}c \compose \lift{F}{A_1}Fa \compose b = \lift{F}{A_1} (c \compose Fa) \compose b.\qedhere\]
\end{proof}

\begin{proof}[Proof of \cref{Pushout discrete opfibration and cosieve}.]
Using the notation of \cref{Pushout of cosieve in Cat}, suppose that \(F\) is a discrete opfibration. We must show that \(\Fbar\) is also a discrete opfibration. Let \(B \in \objectSet{\B}\) and \(d \in \outSet{\D}{\Fbar B}\).

Suppose that \(B \in \objectSet{\A}\). Then \(\Fbar B = F B\), and \(d \in \outSet{\C}{FB}\). As \(F\) is a discrete opfibration, there is a unique \(a \in \outSet{\A}{B}\) such that \(d = Fa\). But \(\outSet{\A}{B} = \outSet{\B}{B}\) as \(\A\) is out-degree-zero in \(\B\); also \(\Fbar a = F a\) for each \(a \in \outSet{\B}{B}\). Hence there is a unique \(a \in \outSet{\B}{B}\) such that \(d = \Fbar a\).

Suppose that \(B \in \objectSet{\B} \setDifference \objectSet{\A}\) and \(\target d \in \objectSet{\B} \setDifference \objectSet{\A}\). Then \(\Fbar B = B\), \(d \in \outSet{\B}{B}\) and \(\Fbar d = d\). As \(\Fbar\) is injective on the morphisms of \(\B\) not in \(\A\), \(d\) is the unique morphism in \(\outSet{\B}{B}\) mapped by \(\Fbar\) to~\(d\).
        
Otherwise, \(B \in \objectSet{\B} \setDifference \objectSet{\A}\) and \(\target d \in \objectSet{\C}\). Then \(\Fbar B = B\), and \(d = \brack{(c_1, b_1)}\) for some \(A_1 \in \objectSet{\A}\), some \(b_1 \in \homSet{\B}{B}{A_1}\) and some \(c_1 \in \homSet{\C}{FA_1}{C}\), where \(C = \target d\). For uniqueness of lifts, suppose that \(b_2 \in \outSet{\B}{B}\) is such that \(d = \Fbar b_2\). Let \(A_2 = \target b_2\). Then \(A_2 \in \objectSet{\A}\) as \(\Fbar A_2 = \target d = C\), and so \(\Fbar b_2 = \brack{(\id{C},b_2)}\). As \(d = \Fbar b_2\), we have \((\id{C},b_2) \sim (c_1, b_1)\). By \cref{Lemma: Pushout discrete opfibration and cosieve}, \(b_2 = \lift{F}{A_2}\id{C} \compose b_2 = \lift{F}{A_1} c_1 \compose b_1\); this determines~\(b_2\). For existence of lifts, note that \(\Fbar (\lift{F}{A_1} c_1 \compose b_1) = \brack{(\id{C},\lift{F}{A_1} c_1 \compose b_1)} = \brack{(F\lift{F}{A_1} c_1, b_1)} = \brack{(c_1, b_1)} = d\).
\end{proof}

\begin{theorem}
\label{Pushouts in Lens}
The functor \(\forget\) creates pushouts of monic lenses with discrete opfibrations.
\end{theorem}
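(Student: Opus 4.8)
The plan is to reduce the pushout to a coequaliser and then invoke \cref{Sufficient condition for reflection}, letting the earlier structural results carry the load. Write the span as $\C \xleftarrow{F} \A \xrightarrow{J} \B$ with $F$ a discrete opfibration and $J$ a monic lens, hence a cosieve by \cref{Monic lenses are cosieves}. First I would form the pushout in $\Cat$ using \cref{Pushout of cosieve in Cat}, obtaining functors $\Fbar \colon \B \to \D$ and $\Jbar \colon \C \to \D$ with $\Jbar$ a cosieve; by \cref{Pushout discrete opfibration and cosieve}, $\Fbar$ is moreover a discrete opfibration. Since $\Fbar$ and $\Jbar$ are then both discrete opfibrations, each carries a \emph{unique} lens structure, so there is exactly one cocone in $\Lens$ lying over the $\Cat$-pushout cocone (keeping the names $\Fbar$, $\Jbar$ for the lens lifts). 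This already supplies the uniqueness-of-lift half of creation; it remains to check that the lifted square commutes in $\Lens$ and is a genuine pushout there.

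For commutativity, I would note that the get functors of $\Jbar \compose F$ and $\Fbar \compose J$ coincide (the $\Cat$-square commutes) and are each a composite of discrete opfibrations, hence discrete opfibrations. As a discrete opfibration admits only one lens structure, the two composite lenses agree, so the lifted square commutes in $\Lens$.

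For the universal property, I would exploit the fact that $\Lens$ has binary coproducts computed exactly as in $\Cat$: the disjoint-union inclusions $\iota_\C \colon \C \to \C \sqcup \B$ and $\iota_\B \colon \B \to \C \sqcup \B$ are cosieves, and a lens out of $\C \sqcup \B$ is precisely a pair of lenses out of $\C$ and $\B$, since the lens laws decouple (no morphism of $\C \sqcup \B$ crosses between the two summands). Setting $F_1 = \iota_\C \compose F$ and $F_2 = \iota_\B \compose J$, cocones on the original span correspond to coforks of $F_1$ and $F_2$, so the pushout of $F$ and $J$ is the coequaliser of $F_1$ and $F_2$; the candidate coequalising lens is the copairing $E \colon \C \sqcup \B \to \D$ of $\Jbar$ and $\Fbar$. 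Its get functor is the $\Cat$-coequaliser of $\forget F_1$ and $\forget F_2$, and, being a copairing of discrete opfibrations out of a disjoint union (which one checks one summand at a time), is itself a discrete opfibration. The cofork condition $E \compose F_1 = E \compose F_2$ is just the commutativity $\Jbar \compose F = \Fbar \compose J$ from the previous step. Hence \cref{Sufficient condition for reflection} applies and shows that $E$ coequalises $F_1$ and $F_2$ in $\Lens$; equivalently, the lifted square is a pushout in $\Lens$. Combined with the unique lifting, this establishes that $\forget$ creates these pushouts.

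I expect the only real obstacle to be the bookkeeping around the coproduct reformulation: verifying that the disjoint union genuinely is the coproduct in $\Lens$ and, crucially, that the copairing $E$ has a discrete-opfibration get functor, since that is exactly the hypothesis that lets \cref{Sufficient condition for reflection} do all the substantive work. Everything else — the commutativity of the lifted square and the uniqueness of the lens lifts — follows formally from the uniqueness of lens structures on discrete opfibrations.
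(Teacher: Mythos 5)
Your proposal is correct and follows essentially the same route as the paper's proof: both reduce the pushout to a coequaliser over the coproduct \(\B \sqcup \C\), both rely on \cref{Pushout of cosieve in Cat} and \cref{Pushout discrete opfibration and cosieve} to get that the pushout injections are discrete opfibrations (hence carry unique lens structures), and both conclude by observing that the copairing is a discrete opfibration so that the reflection result (\cref{Sufficient condition for reflection}, which the paper invokes via \cref{Condition for reflection}) yields the coequaliser property in \(\Lens\). The only cosmetic differences are that you verify directly that coproducts in \(\Lens\) are disjoint unions where the paper cites that \(\forget\) creates coproducts, and that you spell out the uniqueness-of-lift and commutativity bookkeeping for creation more explicitly.
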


\begin{proof}
Using the notation of \cref{Pushout of cosieve in Cat}, suppose that \(F\) is a discrete opfibration. Then \(\Fbar\) is also a discrete opfibration (\cref{Pushout discrete opfibration and cosieve}). Let \(J_\B \colon \B \to \B \sqcup \C\) and \(J_\C \colon \C \to \B \sqcup \C\) be the coproduct injection functors. Coproduct injections in \(\Cat\) are always discrete opfibrations, as is the coproduct copairing of any two discrete opfibrations. Hence \(J_\B\), \(J_\C\) and \(\copair{\Jbar}{\Fbar}\) are all discrete opfibrations. As the composite of two discrete opfibrations is a discrete opfibration, so are \(J_\B \compose J\) and \(J_\C \compose F\). So far, we know that \(\copair{\Jbar}{\Fbar}\) is the coequaliser in \(\Cat\) of \(J_\B \compose J\) and \(J_\C \compose F\), all of these functors have canonical lens structures as they are discrete opfibrations, and \(\copair{\Jbar}{\Fbar}\) coforks \(J_\B \compose J\) and \(J_\C \compose F\) in \(\Lens\). As \(\copair{\Jbar}{\Fbar}\) is a discrete opfibration, the conditions of \cref{Condition for reflection} are satisfied, and so \(\copair{\Jbar}{\Fbar}\) coequalises \(J_\B \compose J\) and \(J_\C \compose F\) in \(\Lens\). As \(\forget\) creates coproducts~\cite{Clarke:2021:CategoryLens}, it follows that \(\Jbar\) and \(\Fbar\) exhibit \(\D\) as the pushout of \(J\) and \(F\) in \(\Lens\).
\end{proof}

One might hope that the above result generalises to pushouts of two discrete opfibrations, or of arbitrary lenses along monics; this is not the case. The following is an example of two discrete opfibrations whose pushout injection functors have no lens structures that give a commutative square of lenses.

\begin{example}
Let \(\A\) and \(\B\) be the preordered sets generated respectively by the following graphs.
\begin{align*}
\begin{tikzcd}[ampersand replacement=\&, row sep={0em}]
Y'_1 \& X' \arrow[l, "f_1'"{swap}] \arrow[r, "f_2'"] \& Y_2' \\ Y_1'' \& X'' \arrow[l, "f_1''"] \arrow[r, "f_2''" swap] \& Y_2''
\end{tikzcd}
&&
\begin{tikzcd}[ampersand replacement=\&]
Y_1 \& X \arrow[l, "f_1"{swap}] \arrow[r, "f_2"] \& Y_2
\end{tikzcd}
\end{align*}
Let \(F \colon \A \to \B\) be the unique functor that sends both \(X'\) and \(X''\) to \(X\), both \(Y_1'\) and \(Y_1''\) to \(Y_1\), and both \(Y_2'\) and \(Y_2''\) to \(Y_2\). Let \(G \colon \A \to \B\) be the unique functor that sends both \(X'\) and \(X''\) to \(X\), both \(Y_1'\) and \(Y_2''\) to \(Y_1\), and both \(Y_2'\) and \(Y_1''\) to \(Y_2\). Both \(F\) and \(G\) are discrete opfibrations. Their pushout in \(\Cat\) is \(\intervalCat\); the pushout injections \(\Fbar, \Gbar \colon \B \to \intervalCat\) are both the unique functor that sends \(X\) to \(0\), and both \(Y_1\) and \(Y_2\) to~\(1\). There are two different lens structures on this functor; one lifts the unique morphism \(u\) of \(\intervalCat\) to \(f_1\), the other lifts it to~\(f_2\). This gives four different combinations of lens structures on \(\Fbar\) and \(\Gbar\). Assume, for a contradiction, that one of these combinations satisfies \(\Fbar G = \Gbar F\) in \(\Lens\). As \(\lift{G}{X'}\lift{\Fbar}{X}u = \lift{F}{X'}\lift{\Gbar}{X}u\), we must have \(\lift{\Fbar}{X}u = \lift{\Gbar}{X}u\). If \(\lift{\Fbar}{X}u = f_1\), then \(\lift{G}{X''}\lift{\Fbar}{X}u = \lift{G}{X''}f_1 = f_2'\) and \(\lift{F}{X''}\lift{\Gbar}{X}u = \lift{F}{X''}f_1 = f_1' \neq f_2'\), which is a contradiction. If \(\lift{\Fbar}{X}u = f_2\), we obtain a similar contradiction.
\end{example}

Next is an example of a lens and a cosieve where the pushout of the get functor of the lens along the cosieve does not have a lens structure (incidentally this lens and cosieve do not have a pushout in \(\Lens\)).

\begin{example}
Let \(\B\) and \(\D\) be the preordered sets generated respectively by the following graphs.
\begin{align*}
\begin{tikzcd}[ampersand replacement=\&]
X \arrow[d, "s" swap] \& W \arrow[l, "f" swap] \arrow[r, "g"] \arrow[d] \& Y \arrow[d, "t"]\\
Z_2 \& Z_1 \arrow[l] \arrow[r]\& Z_3
\end{tikzcd}
&&
\begin{tikzcd}[ampersand replacement=\&]
X' \arrow[dr, "s'" swap] \& W' \arrow[l, "f'" swap] \arrow[r, "g'"] \arrow[d] \& Y' \arrow[dl, "t'"]\\
\& Z'\&
\end{tikzcd}
\end{align*}
Let \(\A\) be the out-degree-zero subcategory of \(\B\) on the objects \(Z_1\), \(Z_2\) and \(Z_3\), and let \(J \colon \A \monicTo \B\) be the inclusion lens. As \(\terminalCat\) is terminal in \(\Lens\)~\cite{Clarke:2021:CategoryLens}, there is a unique lens \(F \colon \A \to \terminalCat\). By \cref{Pushout of cosieve in Cat}, the pushout of \(\forget F\) along \(\forget J\) in \(\Cat\) is the unique functor \(\Fbar \colon \B \to \D\) that maps \(W\) to \(W'\), \(X\) to \(X'\), \(Y\) to \(Y'\), and all of \(Z_1\), \(Z_2\) and \(Z_3\) to~\(Z'\). The functor \(\Fbar\) has no lens structure, otherwise we could derive the contradiction
\[s \compose f = \lift{\Fbar}{X}s' \compose \lift{\Fbar}{W} f' = \lift{\Fbar}{W} (s' \compose f') = \lift{\Fbar}{W} (t' \compose g') = \lift{\Fbar}{Y}t' \compose \lift{\Fbar}{W} g' = t \compose g.\]
\end{example}

From \cref{Pushouts in Lens}, every monic lens has a cokernel pair. Actually, using the epi-mono factorisation, every lens has a cokernel pair, namely, the cokernel pair of its mono factor.

\begin{proposition}
Every monic lens is effective (i.e.\ equalises its cokernel pair).
\end{proposition}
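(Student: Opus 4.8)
The plan is to exhibit a monic lens \(M \colon \A \to \B\) as the equaliser in \(\Lens\) of its cokernel pair. By \cref{Monic lenses are cosieves}, \(M\) is a cosieve, and hence in particular a discrete opfibration. Its cokernel pair is the pushout of \(M\) along itself; since \(M\) is both monic and a discrete opfibration, \cref{Pushouts in Lens} applies, so this pushout exists in \(\Lens\), is created by \(\forget\) from the cokernel pair of \(\forget M\) in \(\Cat\), and its legs \(J_1, J_2 \colon \B \to \C\) are the unique lenses above their (discrete opfibration) get functors. In particular \(J_1 \compose M = J_2 \compose M\).

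First I would check that \(\forget M\) is the equaliser of \(\forget J_1\) and \(\forget J_2\) in \(\Cat\). Instantiating the explicit cokernel pair of \cref{Pushout of cosieve in Cat} with both legs of the span equal to \(M\), a short case analysis identifies the objects and morphisms on which \(\forget J_1\) and \(\forget J_2\) agree. An object outside the image of \(M\) is sent by the two legs into different summands of the object set of the cokernel pair, and a morphism with an endpoint outside the image is likewise separated; on the image, however, every morphism is of the form \(Ma\) because \(M\), being a discrete opfibration, lifts it uniquely, and \(J_1 \compose M = J_2 \compose M\), so the two legs agree there. Thus the agreement subcategory is exactly the image of \(M\), onto which \(M\) corestricts isomorphically, so \(\forget M\) is the \(\Cat\)-equaliser.

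Next I would lift the factorisation to \(\Lens\). Given any lens \(K \colon \E \to \B\) with \(J_1 \compose K = J_2 \compose K\), applying \(\forget\) and the previous step produces a unique functor \(K' \colon \E \to \A\) with \(\forget M \compose K' = \forget K\). I would endow \(K'\) with the put functions \(\lift{K'}{E}a = \lift{K}{E}Ma\) for each \(E \in \objectSet{\E}\) and \(a \in \outSet{\A}{K'E}\); this is well typed since \(Ma \in \outSet{\B}{KE}\). Because \(M\) is a discrete opfibration, \(\lift{M}{K'E}\) is inverse to the action of \(M\) on out-morphisms, so \(\lift{(M \compose K')}{E}b = \lift{K'}{E}\lift{M}{K'E}b = \lift{K}{E}b\), giving \(M \compose K' = K\) as lenses. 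The lens laws for \(K'\) then follow from those for \(K\), using faithfulness of \(M\) for \PutGet{} and functoriality of \(M\) for \PutId{} and \PutPut{}. For uniqueness, any lens \(K''\) with \(M \compose K'' = K\) has \(\forget K'' = K'\) by monicity, and its put functions are forced, since \(\lift{K''}{E}a = \lift{K''}{E}\lift{M}{K'E}Ma = \lift{(M \compose K'')}{E}Ma = \lift{K}{E}Ma = \lift{K'}{E}a\). Hence \(M\) is the equaliser of \(J_1\) and \(J_2\), that is, \(M\) is effective.

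I expect the only delicate point to be the case analysis of the second paragraph, establishing that the agreement locus of \(\forget J_1\) and \(\forget J_2\) is precisely the image of \(M\), in particular ruling out accidental agreement on morphisms straddling the image. Everything afterwards is the routine transport of a lens structure across the discrete opfibration \(M\).
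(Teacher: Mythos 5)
Your proposal is correct and takes essentially the same route as the paper: both use the explicit pushout construction of \cref{Pushout of cosieve in Cat} (via \cref{Pushouts in Lens}) to see that the agreement locus of the cokernel pair \(J_1, J_2\) is exactly the image of \(M\), and then obtain the unique comparison lens by transporting the lens structure of any forking lens across \(M\) --- which is what the paper calls corestriction to \(\A\). The paper's proof is simply terser, leaving the equaliser verification in \(\Cat\) and the lens-law checks implicit.
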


\begin{proof}
Let \(M \colon \A \to \B\) be a monic lens, and let \(J_1, J_2 \colon \B \to \Coker M\) be its cokernel pair. Based on \cref{Pushout of cosieve in Cat}, if \(B \in \objectSet{\B}\) is such that \(J_1B = J_2B\), then \(B \in \objectSet{\A}\); and similarly for morphisms of \(\B\). In particular, the image of any lens which forks \(J_1\) and \(J_2\) is contained in \(\A\), and thus its corestriction to \(\A\) is the unique comparison lens.
\end{proof}

\begin{corollary}
In \(\Lens\), the classes of all monos, effective monos, regular monos, strong monos and extremal monos coincide.
\end{corollary}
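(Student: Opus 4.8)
The plan is to close a cycle of implications among the five classes by feeding in the fact, just established, that every monic lens is effective. In an arbitrary category these classes are nested: an effective mono, being the equaliser of its cokernel pair, is in particular the equaliser of \emph{some} parallel pair and hence regular; every regular mono is strong; every strong mono is extremal; and every extremal mono is, by definition, a mono. This gives the chain
\[
\text{effective} \implies \text{regular} \implies \text{strong} \implies \text{extremal} \implies \text{mono},
\]
valid in any category.

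To collapse this chain into a cycle it remains only to show that every mono is effective, and in \(\Lens\) this is precisely the content of the preceding proposition: each monic lens equalises its cokernel pair, which exists by \cref{Pushouts in Lens}. Chaining this with the general implications, I would conclude that every monic lens is effective, regular, strong and extremal, while conversely each of these is in particular monic; hence the five classes coincide.

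The mathematical substance lies entirely in the preceding proposition, so I expect this corollary to be essentially bookkeeping, with no genuine obstacle. The only steps I would pause over are the two classical implications \emph{regular} \(\implies\) \emph{strong} and \emph{strong} \(\implies\) \emph{extremal}. For the former, given a regular mono \(M\) arising as an equaliser, the equalising pair coforks \(M\), so in any commuting square against an epi one cancels that epi to factor the relevant leg through the equaliser, producing the diagonal filler. For the latter, given a factorisation \(M = G \compose E\) with \(E\) epic, I would fill the square with top \(E\), left the identity, right \(G\) and bottom \(M\) to obtain a section of \(E\); being both a split mono and an epi, \(E\) is then forced to be an isomorphism.
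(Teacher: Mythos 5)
Your proposal is correct and matches the paper's (implicit) argument: the paper offers no separate proof of this corollary precisely because it follows from the preceding proposition (every mono in \(\Lens\) is effective, its cokernel pair existing by the pushout theorem) together with the standard chain effective \(\implies\) regular \(\implies\) strong \(\implies\) extremal \(\implies\) mono, valid in any category, which is exactly the cycle you close. The only nitpicks are terminological: the equalised pair \emph{forks} (not coforks) the mono, and the diagonal filler in your last square is a \emph{retraction} of \(E\) (making \(E\) a split mono), not a section of it; the substance of both arguments is right.
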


\begin{corollary}
Every lens that is both epic and monic is an isomorphism.
\end{corollary}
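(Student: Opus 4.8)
The plan is to read off from the earlier characterisations that the get functor of such a lens is an isomorphism in \(\Cat\), and then lift this isomorphism back to \(\Lens\). Let \(F \colon \A \to \B\) be a lens that is both epic and monic. Since \(F\) is monic, \cref{Monic lenses are cosieves} tells us that \(\forget F\) is a cosieve, that is, an injective-on-objects discrete opfibration. Since \(F\) is epic, \cref{Epic lenses are surjective on objects and morphisms} tells us that \(\forget F\) is surjective both on objects and on morphisms. So \(\forget F\) is already bijective on objects; for morphisms, I would observe that injectivity follows from the discrete opfibration property together with injectivity on objects (if \(\forget F a_1 = \forget F a_2\), then their sources agree, so after applying injectivity on objects the morphisms \(a_1\) and \(a_2\) share a common source, and uniqueness of lifts forces \(a_1 = a_2\)), while surjectivity is given directly. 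Hence \(\forget F\) is bijective on objects and on morphisms, and therefore an isomorphism in \(\Cat\).

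It remains to lift this to an isomorphism in \(\Lens\). The inverse functor \(K = (\forget F)^{-1}\) is itself an isomorphism, hence a discrete opfibration, so there is a unique lens \(\Khat \colon \B \to \A\) above it. The composite \(\Khat \compose F\) is a lens whose get functor is \(\forget \Khat \compose \forget F = K \compose \forget F = \id{\A}\); as the identity functor is a discrete opfibration and the identity lens is a lens above it, uniqueness of the lens structure forces \(\Khat \compose F = \id{\A}\) in \(\Lens\). Symmetrically \(F \compose \Khat = \id{\B}\). Thus \(\Khat\) is a two-sided inverse of \(F\), and \(F\) is an isomorphism.

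The main obstacle — really the only subtle point — is that \(\forget\) is not faithful, so one cannot conclude that two lenses are equal merely because their get functors agree. The argument circumvents this by invoking the uniqueness of the lens structure over a discrete opfibration (in particular over an identity functor) rather than any cancellativity of \(\forget\). Everything else is a direct application of the two characterisation corollaries.
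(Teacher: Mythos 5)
Your proof is correct, but it takes a genuinely different route from the paper. The paper obtains this corollary abstractly: it first proves (via the Fritsch--Latch pushout construction and \cref{Pushouts in Lens}) that every monic lens is effective, deduces that all monos in \(\Lens\) are extremal, and then invokes the standard categorical fact that a morphism which is both epic and an extremal (or effective) mono is an isomorphism. You instead argue concretely from the two characterisation results alone: by \cref{Monic lenses are cosieves} and \cref{Epic lenses are surjective on objects and morphisms}, the get functor is an injective-on-objects discrete opfibration that is surjective on objects and morphisms, hence bijective on objects and (using uniqueness of lifts to get injectivity on morphisms) bijective on morphisms, so an isomorphism in \(\Cat\); you then lift the inverse through the unique lens structure on a discrete opfibration and conclude both composites are identity lenses, again by uniqueness of lens structures over identity functors. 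Your handling of the non-faithfulness of \(\forget\) is exactly right and is the one subtle point. What each approach buys: yours is elementary and self-contained, depending only on \cref{Section: Characterising monic and epic lenses} and not at all on the pushout machinery of \cref{Section: Pushouts of discrete opfibrations along monos}; it also rederives, in passing, the useful fact that any lens whose get functor is an isomorphism is itself an isomorphism (a fact the paper uses without proof in \cref{Lens doesn't have all coequalisers}). The paper's route costs nothing extra given that the effectiveness of monic lenses was wanted anyway, and it situates the corollary within the standard theory of regular monos and factorisation systems, which is the theme of that section.
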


\section{Regular epic lenses}
\label{Section: Regular epic lenses}

In this section, we show that all epis in \(\Lens\) are regular. This gives us another class of coequalisers in \(\Lens\), namely, the epic lenses. For contrast, recall that not all epis in \(\Cat\) are regular.

\begin{example}
In \cref{Example epic functor not surjective on morphisms}, we saw that the functor \(J \colon \intervalCat \to \isomorphismCat\) is epic. It is, however, not a regular epi. Indeed, if \(J\) coforks \(F_1, F_2 \colon \A \to \intervalCat\), then \(F_1 = F_2\) as \(J\) is monic, and so \(\id{\intervalCat}\) is the coequaliser of \(F_1\) and \(F_2\), but \(\intervalCat\) and \(\isomorphismCat\) are not isomorphic.
\end{example}

\begin{proposition}
    \label{Epic lens over effective epic functor}
    The get functor of every epic lens is an effective epi in~\(\Cat\).
\end{proposition}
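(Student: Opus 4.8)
The plan is to verify directly that $\forget E$ enjoys the universal property of the coequaliser of its kernel pair in $\Cat$. Write $E \colon \A \to \B$ for the epic lens and let $P_1, P_2 \colon \Ker\forget E \to \A$ be the kernel pair of $\forget E$ in $\Cat$; explicitly, $\Ker\forget E$ has as objects the pairs $(A_1, A_2)$ with $E A_1 = E A_2$ and as morphisms the pairs $(a_1, a_2)$ with $E a_1 = E a_2$, and $P_1, P_2$ are the projections. A functor $G \colon \A \to \D$ therefore satisfies $G \compose P_1 = G \compose P_2$ exactly when it sends any two objects, and any two morphisms, with equal image under $E$ to the same object, respectively morphism, of $\D$; I will call such a $G$ \emph{constant on $E$-fibres}. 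In particular $\forget E$ is constant on $E$-fibres, so it coforks $P_1$ and $P_2$.

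Next, given any functor $G \colon \A \to \D$ that is constant on $E$-fibres, I would construct the factorisation $H \colon \B \to \D$ with $H \compose \forget E = G$. Here I invoke \cref{Epic lenses are surjective on objects and morphisms}: since $E$ is epic, $\forget E$ is surjective on objects and morphisms, so every object of $\B$ is $EA$ for some $A \in \objectSet{\A}$ and every morphism of $\B$ is $Ea$ for some morphism $a$ of $\A$. I then set $H(EA) = GA$ and $H(Ea) = Ga$. These assignments are well defined precisely because $G$ is constant on $E$-fibres, and they are forced by the equation $H \compose \forget E = G$ together with surjectivity, which yields uniqueness of $H$ at once.

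It remains to check that $H$ is a functor, and this is the step where the lens structure of $E$ is indispensable. For identities, the lift $\lift{E}{A}\id{EA} = \id{A}$ provided by \PutId{} gives $H\id{EA} = G\id{A} = \id{GA}$. For composition, the difficulty is that arbitrary lifts of composable morphisms of $\B$ need not be composable in $\A$; the put functions supply \emph{coherent} lifts. Given composable $b \colon EA \to B'$ and $b' \colon B' \to B''$ in $\B$, I take $a = \lift{E}{A}b$ and $a' = \lift{E}{\target a}b'$, which are composable in $\A$, and \PutPut{} gives $a' \compose a = \lift{E}{A}(b' \compose b)$, a lift of $b' \compose b$ by \PutGet{}. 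Functoriality of $G$ and well-definedness of $H$ then give $H(b' \compose b) = G(a' \compose a) = Ga' \compose Ga = Hb' \compose Hb$.

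This composition check is exactly where I expect the main obstacle to lie, and it is essential rather than routine: it is \emph{not} true that an arbitrary functor surjective on objects and morphisms is an effective epi, since one can arrange a surjective-on-objects-and-morphisms functor for which no composable pair of lifts exists, so that the coequaliser of its kernel pair acquires a spurious extra morphism. The argument must therefore genuinely use the lens laws, via \PutPut{}, to produce composable lifts whose composite lifts the composite. Once $H$ is shown to be a functor, it is the unique factorisation of $G$ through $\forget E$, so $\forget E$ is the coequaliser of $P_1$ and $P_2$, that is, an effective epi in $\Cat$.
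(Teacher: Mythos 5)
Your proof is correct and is essentially the paper's own argument: the paper merely factors the same content through an explicit intermediate lemma—every functor surjective on composable pairs is an effective epi in \(\Cat\)—and then, exactly as you do, uses the put functions to exhibit composable lifts of any composable pair, while surjectivity on objects and morphisms (\cref{Epic lenses are surjective on objects and morphisms}) yields well-definedness and uniqueness of the induced functor via the kernel pair. One cosmetic remark: \PutPut{} is not actually indispensable in your composition check, since once \(a = \lift{E}{A}b\) and \(a' = \lift{E}{\target a}b'\) are composable, functoriality of \(E\) already gives \(E(a' \compose a) = Ea' \compose Ea = b' \compose b\), so \(a' \compose a\) is a lift of \(b' \compose b\) whether or not it is the canonical one; what is genuinely essential, as you rightly stress, is that the put functions produce \emph{composable} lifts at all.
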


A functor \(E \colon \B \to \C\) is \textit{surjective on composable pairs} if for each composable pair \((c, c')\) of \(\C\), there is a composable pair \((b, b')\) of \(\B\) such that \(Eb = c\) and \(Eb' = c'\); such functors are necessarily also surjective on objects and morphisms. If \(E \colon \B \to \C\) is an epic lens, then \(\forget E\) is surjective on composable pairs; indeed, if \((c, c')\) is a composable pair of \(\C\), then there is a \(B \in \objectSet{\B}\) such that \(EB = \source c\), and \((\lift{E}{B}c, \lift{E}{\target \lift{E}{B}c}c')\) is a composable pair above \((c, c')\). Hence it suffices to prove the following lemma.

\begin{lemma}
\label{Effective epic functors}
All functors that are surjective on composable pairs are effective epis in \(\Cat\).
\end{lemma}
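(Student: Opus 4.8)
The plan is to use the standard fact that an effective epi is precisely a morphism that is the coequaliser of its own kernel pair, and to verify this universal property directly rather than through any explicit construction of coequalisers in $\Cat$. Let $E \colon \B \to \C$ be surjective on composable pairs, and let $P_1, P_2 \colon \Ker E \to \B$ be its kernel pair in $\Cat$, which exists because $\Cat$ has pullbacks. The objects of $\Ker E$ are the pairs $(B_1, B_2)$ of objects of $\B$ with $EB_1 = EB_2$, its morphisms $(B_1, B_2) \to (B_1', B_2')$ are the pairs $(f_1, f_2)$ of morphisms of $\B$ with $Ef_1 = Ef_2$, and $P_1$, $P_2$ are the two projections. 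By the pullback property, $E \compose P_1 = E \compose P_2$, so $E$ is a cofork of $P_1$ and $P_2$.

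Now let $G \colon \B \to \D$ be any cofork of $P_1$ and $P_2$. Unwinding $G \compose P_1 = G \compose P_2$ shows exactly that $GB_1 = GB_2$ whenever $EB_1 = EB_2$, and $Gf_1 = Gf_2$ whenever $Ef_1 = Ef_2$; that is, $G$ is constant on the fibres of $E$. Since $E$ is surjective on objects and on morphisms, both being consequences of surjectivity on composable pairs, I would define a function $H$ on the objects and morphisms of $\C$ by $H(Ex) = Gx$. The fibrewise constancy of $G$ makes this well-defined on objects and on morphisms, it satisfies $H \compose E = G$ by construction, and it is the unique such assignment because $E$ is surjective.

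It then remains to check that $H$ is a functor, and this is the only step where surjectivity on composable pairs is used in full. Preservation of identities is immediate, since $H(\id{EB}) = H(E\id{B}) = G\id{B} = \id{GB} = \id{H(EB)}$ for each object $B$ of $\B$. For preservation of composition, let $(c, c')$ be a composable pair of $\C$; by hypothesis there is a composable pair $(b, b')$ of $\B$ with $Eb = c$ and $Eb' = c'$, and because $b$ and $b'$ are genuinely composable in $\B$, functoriality of $E$ and $G$ give $H(c' \compose c) = H(E(b' \compose b)) = G(b' \compose b) = Gb' \compose Gb = Hc' \compose Hc$. Thus $H$ is the unique functor with $H \compose E = G$, so $E$ is the coequaliser of its kernel pair, i.e.\ an effective epi. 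The crux is exactly this composition step: without a composable lift of $(c, c')$ one could not reduce preservation of composition in $\C$ to functoriality in $\B$, and this is precisely where the hypothesis is indispensable, everything else being routine.
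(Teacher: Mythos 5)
Your proof is correct and follows essentially the same route as the paper's: take the kernel pair in \(\Cat\), observe that any cofork \(G\) is constant on the fibres of \(E\), define \(H\) by \(H(Ex) = Gx\) using surjectivity on objects and morphisms, and use surjectivity on composable pairs exactly where the paper does—to lift a composable pair of \(\C\) and deduce that \(H\) preserves composition. No gaps; the identification of the composition step as the sole place the full hypothesis is needed matches the paper's argument precisely.
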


\begin{proof}
Let \(E \colon \B \to \C\) be a functor that is surjective on composable pairs, and let its kernel pair be \(F_1, F_2 \colon \Ker E \to \B\). We must show that \(E\) coequalises \(F_1\) and \(F_2\). Let \(G \colon \B \to \D\) cofork \(F_1\) and~\(F_2\).

Suppose that there is a functor \(H \colon \C \to \D\) such that \(G = H \compose E\). As \(E\) is surjective on objects, for all \(C \in \objectSet{\C}\) there is a \(B \in \objectSet{\B}\) such that \(EB = C\), and so \(HC = HEB = GB\); this equation determines \(H\) on objects. As \(E\) is surjective on morphisms, a similar equation determines \(H\) on morphisms.

To define \(H \colon \C \to \D\) with these equations, the values of \(GB\) and \(Gb\) should be independent of the choice of \(B\) above \(C\) and \(b\) above \(c\). For all \(C \in \objectSet{\C}\) and all \(B, B' \in \objectSet{\B}\) such that \(EB = EB' = C\), we have \(G B = GF_1 \pair{B}{B'} = GF_2 \pair{B}{B'} = G B'\), where \(\pair{B}{B'} \in \objectSet{\Ker E}\) comes from the pullback property; hence the object map of \(H\) is well defined. Its morphism map is similarly also well defined.

Define \(H\) with the above equations. By construction, \(G = H \compose E\). We must show that \(H\) is a functor. For all \(C \in \objectSet{\C}\), there is a \(B \in \objectSet{\B}\) such that \(EB = C\), and
\(H \id{C} = G \id{B} = \id{G B} = \id{H C}\); thus \(H\) preserves identities. For all composable pairs \(c\) and \(c'\) of \(\C\), there is a composable pair \(b\) and \(b'\) of \(\B\) such that \(Eb = c\) and \(Eb' = c'\), and \(H(c' \compose c) = G(b' \compose b) = Gb' \compose G b = Hc' \compose H c\); thus \(H\) preserves composites.
\end{proof}

\begin{corollary}
\label{Epic lens is regular}
Every epic lens coequalises its proxy kernel pair, and so is regular.
\end{corollary}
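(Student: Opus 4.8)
The plan is to realise $E$ as a coequaliser of its proxy kernel pair by appealing to \cref{Condition for reflection}. Let $E \colon \B \to \C$ be an epic lens and let $P_1, P_2 \colon \Ker \forget E \to \B$ be its proxy kernel pair, so that $\forget P_1$ and $\forget P_2$ are the kernel pair of $\forget E$ in $\Cat$. I first check the two standing hypotheses of \cref{Condition for reflection} with $F_1 = P_1$ and $F_2 = P_2$. Exactly as in the proof that $\forget$ preserves monos, the proxy kernel pair satisfies $E \compose P_1 = E \compose P_2$, so $E$ coforks $P_1$ and $P_2$ in $\Lens$. Moreover, \cref{Epic lens over effective epic functor} tells us that $\forget E$ is an effective epi in $\Cat$, i.e.\ it coequalises its kernel pair $\forget P_1, \forget P_2$; hence $\forget E$ coequalises $\forget P_1$ and $\forget P_2$ in $\Cat$.

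It then remains to verify \cref{Equation: Condition for reflection}. Let $G \colon \B \to \D$ be any lens that coforks $P_1$ and $P_2$, fix $B \in \objectSet{\B}$ and $d \in \outSet{\D}{GB}$, and abbreviate $g = \lift{G}{B}d$. The key move is to evaluate the coforking equation $G \compose P_1 = G \compose P_2$ at the diagonal object $\pair{B}{B} \in \objectSet{\Ker \forget E}$, which satisfies $P_1\pair{B}{B} = P_2\pair{B}{B} = B$. Comparing the put functions of $G \compose P_1$ and $G \compose P_2$ at $\pair{B}{B}$ and $d$ gives $\lift{P_1}{\pair{B}{B}}g = \lift{P_2}{\pair{B}{B}}g$. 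Expanding the proxy put functions, the left-hand side is $\pair{g}{\lift{E}{B}Eg}$ while the right-hand side is $\pair{\lift{E}{B}Eg}{g}$; applying the projection $P_1$ to this equation of morphisms of $\Ker \forget E$ yields $g = \lift{E}{B}Eg$, which is precisely \cref{Equation: Condition for reflection}. \cref{Condition for reflection} then shows that $E$ coequalises $P_1$ and $P_2$, so $E$ is a regular epi.

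The genuinely substantive step is this last verification; the rest is bookkeeping with the definitions of proxy kernel pair and effective epi. The conceptual content is that a cofork of the proxy kernel pair is forced, on its put functions, to identify the two proxy lifts of $g$ along the diagonal, and these lifts are the ``swapped'' pairs $\pair{g}{\lift{E}{B}Eg}$ and $\pair{\lift{E}{B}Eg}{g}$, which can only coincide when $g = \lift{E}{B}Eg$.
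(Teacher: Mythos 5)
Your proof is correct and takes essentially the same approach as the paper: both check the hypotheses of \cref{Condition for reflection} using \cref{Epic lens over effective epic functor}, and then verify \cref{Equation: Condition for reflection} by evaluating the cofork at the diagonal object \(\pair{B}{B}\) and comparing the two proxy puts \(\pair[\big]{\lift{G}{B}d}{\lift{E}{B}E\lift{G}{B}d}\) and \(\pair[\big]{\lift{E}{B}E\lift{G}{B}d}{\lift{G}{B}d}\). The only difference is cosmetic: you make explicit the coforking hypothesis \(E \compose P_1 = E \compose P_2\) (which the paper leaves implicit as a standard property of proxy pullbacks) and phrase the final step as applying the projection \(P_1\) rather than directly comparing components of the pair.
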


\begin{proof}
Let \(E \colon \B \to \C\) be an epic lens. Let \(F_1, F_2 \colon \Ker \forget E \to \B\) be the proxy kernel pair of \(E\) in \(\Lens\). By \cref{Epic lens over effective epic functor}, \(\forget E\) coequalises \(\forget F_1\) and \(\forget F_2\) in \(\Cat\). Let \(G \colon \B \to \D\) be a lens that coforks \(F_1\) and \(F_2\), let \(B \in \objectSet{\B}\), let \(d \in \outSet{\D}{GB}\), and let \(C = EB\). Then \(\lift{(G \compose F_1)}{\pair{B}{B}} d
= \lift{F_1}{\pair{B}{B}}\lift{G}{B} d
= \pair[\big]{\lift{G}{B} d}{\lift{E}{B}E\lift{G}{B} d}\), and similarly \(\lift{(G \compose F_2)}{\pair{B}{B}}{d} = \pair[\big]{\lift{E}{B}E\lift{G}{B} d}{\lift{G}{B}d}\). As \(G\) coforks \(F_1\) and \(F_2\), we have \(\lift{G}{B} d = \lift{E}{B}E\lift{G}{B} d\). By \cref{Condition for reflection}, \(E\) coequalises \(F_1\) and \(F_2\) in \(\Lens\).
\end{proof}

\begin{corollary}
In \(\Lens\), the classes of all epis, regular epis, strong epis and extremal epis coincide.
\end{corollary}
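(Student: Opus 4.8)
The plan is to deduce the corollary purely formally from \cref{Epic lens is regular} together with the implications between the various flavours of epimorphism that hold in \emph{every} category. Specifically, I would invoke the general chain
\[
\text{regular epi} \implies \text{strong epi} \implies \text{extremal epi} \implies \text{epi},
\]
none of whose links requires any structure on the ambient category beyond the bare definitions, and then close it into a cycle using \cref{Epic lens is regular}.

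First I would recall, or quickly re-prove, the three general implications. A regular epic lens is a coequaliser, and any coequaliser is left orthogonal to every mono---the required diagonal filler exists because any cofork factors through the coequaliser, and is unique because coequalisers are epic---so every regular epic lens is strong epic. Next, if a strong epic lens \(E \colon \B \to \C\) factors as \(E = M \compose G\) through a monic lens \(M \colon \D \to \C\), then the square with \(E\) on the left, \(\id{\C}\) on the bottom, \(G\) on top and \(M\) on the right commutes, and left orthogonality of \(E\) against \(M\) supplies a lens \(H \colon \C \to \D\) with \(M \compose H = \id{\C}\); since \(M\) is then both split epic and monic, it is an isomorphism, whence \(E\) is extremal epic. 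Finally, an extremal epic lens is, by definition, epic.

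Then I would close the cycle: by \cref{Epic lens is regular}, every epic lens is regular epic, and chaining the three implications above returns us to epic. Hence in \(\Lens\) the four classes---epic, regular epic, strong epic and extremal epic---all imply one another, and so coincide. The argument is entirely formal once \cref{Epic lens is regular} is in hand; the only link that genuinely uses an orthogonality condition rather than a mere unwinding of definitions is \(\text{strong}\implies\text{extremal}\), so that is the step I would write out most carefully.
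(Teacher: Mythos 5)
Your proposal is correct and matches the paper's (implicit) argument exactly: the paper states this corollary without proof, precisely because it follows from \cref{Epic lens is regular} together with the standard cycle regular \(\implies\) strong \(\implies\) extremal \(\implies\) epi \(\implies\) regular, which is what you wrote out. The only detail worth making explicit in your first step is that the mono \(M\) forces the top leg of the square to cofork the parallel pair, which is what licenses factoring through the coequaliser.
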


\begin{corollary}
In \(\Lens\), the class of all morphisms that are left orthogonal to the class of all monos is the class of all epis.
\end{corollary}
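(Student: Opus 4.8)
The plan is to establish the two inclusions separately, using the preceding corollary that every epic lens is a strong epi together with the image factorisation of \cref{Lens image factorisation}; this lets us deduce the orthogonality without ever constructing a diagonal filler by hand.

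For the inclusion of the epic lenses into the left-orthogonal class, let \(E \colon \B \to \C\) be an epic lens and let \(M\) be any monic lens. By the preceding corollary the epic lenses coincide with the strong epis, so \(E\) admits a diagonal filler for every commutative square whose right-hand side is \(M\); since \(M\) is monic, any such filler is automatically unique. Thus \(E\) is left orthogonal to \(M\), and, as \(M\) ranges over all monic lenses, \(E\) lies in the class of morphisms left orthogonal to all monos.

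For the reverse inclusion, let \(E \colon \B \to \C\) be left orthogonal to every monic lens, and factor it as \(E = M \compose E'\) with \(M \colon \Image E \to \C\) monic and \(E' \colon \B \to \Image E\) epic, as in \cref{Lens image factorisation}. The square \(\id{\C} \compose E = M \compose E'\) commutes, so orthogonality of \(E\) against \(M\) furnishes a lens \(H \colon \C \to \Image E\) with \(H \compose E = E'\) and \(M \compose H = \id{\C}\). The latter equation exhibits \(M\) as a split epi; being also monic, \(M\) is an isomorphism. Hence \(E = M \compose E'\) is the composite of an isomorphism with an epic lens, and so is itself epic.

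Neither direction is genuinely difficult; the one subtlety is the orientation of the orthogonality square in the second part. One must place \(E'\) and \(\id{\C}\) as the two remaining sides so that the resulting filler \(H\) is a section of \(M\), since it is precisely this that forces the monic lens \(M\) to be invertible and thereby collapses the factorisation.
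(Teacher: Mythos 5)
Your proof is correct, and the first inclusion (epis are left orthogonal to monos) is exactly the paper's argument: invoke the preceding corollary that every epic lens is a strong epi, with uniqueness of the diagonal filler automatic because the right-hand side is monic. The second inclusion, however, takes a genuinely different route. The paper argues that since \(\Lens\) has equalisers (a result cited from Chollet et al.), any morphism left orthogonal to all monos is epic --- the implicit argument being that a fork \(G_1 \compose E = G_2 \compose E\) factors \(E\) through the equaliser of \(G_1\) and \(G_2\), which is monic, and orthogonality then splits that equaliser, forcing \(G_1 = G_2\). You instead run the splitting argument against the mono factor \(M\) of the image factorisation \(E = M \compose E'\) from \cref{Lens image factorisation}: orthogonality against \(M\) produces a section of \(M\), so \(M\) is split epic and monic, hence invertible, and \(E\) is an isomorphism composed with an epi. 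Both arguments are sound and of comparable length. Yours has the merit of staying entirely within results proved in the paper (the image factorisation and the characterisation of epic lenses as the surjective-on-objects lenses), whereas the paper's leans on the externally cited existence of equalisers; conversely, the paper's argument is the more general categorical fact, valid in any category with equalisers regardless of whether an epi--mono factorisation exists, which is why it can be stated in one sentence.
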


\begin{proof}
As \(\Lens\) has equalisers~\cite{Clarke:2021:CategoryLens}, every morphism that is left orthogonal to the class of all monos is an epi. Conversely, we have already shown that every epi is a strong epi.
\end{proof}

\begin{remark}
As every lens factors as an epi followed by a mono~(\cref{Factorisation System}), it follows that the class of all epis and the class of all monos together form an orthogonal factorisation system on \(\Lens\).
\end{remark}

\section{Conclusion}
\label{Section: Conclusion}

In this article, we have seen a number of results which advance our understanding of the category \(\Lens\) of (asymmetric delta) lenses. We now have a complete elementary characterisation of the monos and epis in \(\Lens\), the monos being the unique lenses on cosieves and the epis being the surjective-on-objects lenses; from this, we see that Johnson and Roseburgh’s factorisation system on \(\Lens\)~\cite{JohnsonRoseburgh:2021:TheMoreLegsTheMerrier} is actually an epi-mono factorisation system. We have also initiated a study of the coequalisers in \(\Lens\). Despite \(\Lens\) not having all coequalisers, nor the forgetful functor from \(\Lens\) to \(\Cat\) preserving or reflecting them, we have two interesting positive results. First, every epic lens coequalises its proxy kernel pair. Second, \(\Lens\) has pushouts of discrete opfibrations along cosieves. Our characterisation of the epic lenses played a central role in the proof of both of these results, and hopefully will enable future work to completely characterise the coequalisers in \(\Lens\).

That every epic lens coequalises its proxy kernel pair is yet another result that emphasises the parallels between proxy pullbacks in \(\Lens\) and real pullbacks in other categories. An interesting question for future work is whether there is an axiomatisation of the notion of proxy pullback from which one may prove interesting general results which also apply to other categories. Existing work in this direction include Bumpus and Kocsis’ \textit{proxy pushout}~\cite{bumpus:2021:spined-categories:-generalizing-tree-width}, which inspired our use of the name proxy pullback, as well as Böhm’s \textit{relative pullbacks}~\cite{bohm:2019:crossed-modules-monoids-relative} and Simpson’s \textit{local independent products}~\cite{simpson:2018:category-theoretic-structure-for-independence}. One potential use for such an axiomatised proxy pullback would be to give a generalised notion of regular category; the category \(\Lens\) is an obvious candidate example from which to draw inspiration. This notion of a proxy regular category may even be helpful for understanding symmetric lenses, which are known to be equivalence classes of spans of asymmetric ones, as some kind of relations in \(\Lens\), although this is as yet merely speculation.

\section*{Acknowledgements}
Many thanks go to my supervisors Michael Johnson, Richard Garner and Samuel Muller for their continual support; especially to Michael for his guidance, encouragement and suggestions, and all of the time he has dedicated to our regular meetings. I would also like to thank Bryce Clarke; his presentation to the Australian Category Seminar about the progress made at the Applied Category Theory Adjoint School in 2020 was what led me to think about monic and epic lenses, and the rest of the ideas in this paper followed from there; his feedback on my work has also been very helpful.

\newpage

\raggedright
\bibliographystyle{eptcs}
\bibliography{references}
\end{document}